\def\supp{\mathop{\rm supp}\nolimits}
\newtheorem{theorem}{Theorem}[section]
\newtheorem{lemma}[theorem]{Lemma}
\newtheorem{proposition}[theorem]{Proposition}
\newtheorem{definition}[theorem]{Definition}
\newtheorem{remark}[theorem]{Remark}
\newcommand{\R}{\mathbb{R}}
\renewenvironment{proof}[1][.]{%
\bigskip\noindent{\bf Proof#1 }}{%
\hfill$\blacksquare$\bigskip}
\newcommand{\mS}{\mathcal S}
\newcommand{\mM}{\mathcal M}
\newcommand{\on}{\operatorname}
\begin{document}
\pagestyle{myheadings}
\title{The Hutchinson-Barnsley theory for generalized iterated function systems by means of infinite iterated function systems}
\author[1]{Elismar R. Oliveira
\thanks{E-mail: elismar.oliveira@ufrgs.br}}
\affil[1]{Universidade Federal do Rio Grande do Sul}

\date{\today}
\maketitle

\begin{abstract}
The study of generalized iterated function systems (GIFS) was introduced by Mihail and Miculescu in 2008. We provide a new approach to study those systems as the limit of the Hutchinson-Barnsley  {setting for infinite iterated function systems (IIFS) which has been developed} by many authors in the last years. We show  that any attractor of a contractive generalized iterated function system is the limit {with respect to} Hausdorff-Pompeiu metric of attractors of contractive infinite iterated function systems. We also prove that any Hutchinson measure for a contractive generalized iterated function system with probabilities is the limit with respect to the Monge-Kantorovich metric of the Hutchinson measures for contractive infinite iterated function systems with probabilities.
\end{abstract}
\textbf{Key words:} \emph{iterated function systems, generalized iterated function systems, attractors, infinite iterated function systems, fractal, Hutchinson measures}\\
\textbf{2010 Mathematics Subject Classification:} \emph{Primary 28A80, 54H20; Secondary  26E25, 47H04, 93E03.}

\section{Introduction}
The study of generalized iterated function systems, introduced in 2008 by Mihail and Miculescu~\cite{Mihail2008}, provided an interesting perspective of how to {produce fractal sets that extends the classical one related to contractive iterated function systems}. In 2015 Strobin~\cite{strobin2015attractors} has proved  the existence of attractors of generalized iterated function systems of order $m$ which are not attractor of any generalized iterated  function system of order $m-1$, in particular of any contractive  finite iterated function system (which is also a generalized iterated function systems of order $1$).

Many work has been done to directly attack the problem of  developing the Hutchinson-Barnsley theory for generalized iterated function systems by direct means such as fixed point theorems and code spaces. It is worth to notice two very recent works published in the last two years. The first one  {is due to Strobin~\cite{Strobin2020}}, proving that a generalized iterated function system with probabilities consisting of generalized contractive maps generates the unique generalized Hutchinson measure  {extending the recent results due to Miculescu and Mihail}. The analogous problem for the place dependent case  {has been} leaved open (see {\cite[Problem 5.3]{Strobin2020}}). The second one  {has been} Guzik and Kapica~\cite{Guzik_2021} where a criteria on the existence of a unique attracting probability measure for stochastic process induced by generalized iterated function systems with (fixed) probabilities is developed. Both works {require} intricate and profound machinery to achieve these results.

Seeking to obtain some dynamical meaning, and hopefully an elementary way to address these problems, we employ another approach, proving that any attractor of a contractive generalized iterated function system is the limit,  {with respect to} Hausdorff-Pompeiu metric, of attractors of contractive infinite iterated function systems and that any Hutchinson measure for a contractive generalized iterated function system with probabilities is the limit  {with respect to} the Monge-Kantorovich metric of the Hutchinson measures for contractive infinite iterated function systems with probabilities.  Regarding to fractal generation, we show that it is enough to study (possibly infinite, uncountable) iterated function systems instead generalized iterated function systems. As a  byproduct we briefly present some new iterative procedures to approximate the attractor and the Hutchinson measure of a generalized iterated function system via evaluation maps.  {In the paper we deal with Banach contractive type function systems, but most cited results can be extended to function systems consisting of maps satisfying weaker contractive conditions.}

The paper is organized as follows:\\
In Section~\ref{sec:Infinite IFS} we recall some basic facts on infinite iterated function systems, in particular the sufficient conditions  {for} the existence of an attractor from \cite{Dumitru2013AttractorsOI}. In Section~\ref{sec:reduction} we show how to introduce an infinite iterated function system associated to a given generalized iterated function system and a closed and bounded set. Then we prove that it  {satisfies sufficient conditions for generating an attractor and we define} an evaluation map which assign this attractor to the  chosen set. Finally, we show that the evaluation map is a contraction on the space of nonempty bounded and closed sets and that the attractor of the given generalized iterated function system is a fixed point of  {this contraction map}. This proves our first main result, Theorem~\ref{thm:main-thm}.  {Then}, we introduce the joint evaluation map for sets and probabilities proving that it is also a contraction and has the attractor and the Hutchinson measure of the generalized iterated function system as its fixed point, which is the content of our main result, Theorem~\ref{thm:main_theorem_measure}. For the last, in the Section~\ref{sec:concluding-remarks} we explain some additional facts and discuss some possibilities for future work.

As this work is a connection between different areas of research on iterated function systems,  {for the reader convenience} we provide a detailed set of known results on the Hutchinson-Barnsley theory along of the text and a bibliographical review.

\section{Preliminaries and Infinite IFS}\label{sec:Infinite IFS}
First we recall some basic facts on sets and metric spaces {,} necessary to study infinite iterated function systems. Our main reference is \cite{Dumitru2013AttractorsOI}, however many authors have studied this subject in the last few years {, see \cite{lukawska_jachymski_2005}, \cite{hyong2005ergodic}, \cite{Systems2008}, \cite{Miculescu2012}, \cite{secelean2002sufficient}, \cite{Secelean2014}, \cite{Secelean2011}, \cite{Sec01}, \cite{Secelean2014Invar} and \cite{CIM2014}. From now on,} $(X, d)$ is always a metric space. We will denote by $\mathcal{U}^{*}(X)$ the set of nonempty subsets of $X$,  $\mathcal{K}^{*}(X)$ the set of nonempty compact subsets of $X$, $\mathcal{B}^{*}(X)$ the set of nonempty bounded closed subsets of $X$.  {As usual, we denote by $\bar{A}$ the closure of $A$, with respect to the topology of $(X, d)$.}

The generalized Hausdorff-Pompeiu semi-distance is the  {function} $$h: \mathcal{U}^{*}(X) \times \mathcal{U}^{*}(X) \to [0,+\infty]$$ defined by $h(A, B)=\max \{d(A, B), d(B, A)\}$, where $$\displaystyle d(A, B)=\sup _{x \in A} d(x, B)=\sup _{x \in A}\left(\inf _{y \in B} d(x, y)\right).$$

 {We will be mostly interested in spaces} $\left(\mathcal{K}^{*}(X), h\right)$ and $\left(\mathcal{B}^{*}(X), h\right)$. For a function $\psi: X \rightarrow X$ we denote by $\operatorname{Lip}(\psi) \in[0,+\infty]$ the Lipschitz constant associated to $\psi$, which is given by
$\displaystyle
\operatorname{Lip}(\psi)=\sup _{x, y \in X ; x \neq y} \frac{d(\psi(x), \psi(y))}{d(x, y)}.
$
We say that $\psi$ is a Lipschitz function if $\operatorname{Lip}(\psi)<+\infty$ and a contraction if $\operatorname{Lip}(\psi)<1$.

\begin{theorem}[\cite{Barnsley1993}]\label{thm:metric-spaces-of-sets}
In the above frame, the following conditions hold
\begin{enumerate}
  \item  $\left(\mathcal{B}^{*}(X), h\right)$ and $\left(\mathcal{K}^{*}(X), h\right)$ are metric spaces  {and $\left(\mathcal{K}^{*}(X), h\right)$ is closed in} $\left(\mathcal{B}^{*}(X), h\right)$.
  \item   If $(X, d)$ is complete, then $\left(\mathcal{B}^{*}(X), h\right)$ and $\left(\mathcal{K}^{*}(X), h\right)$ are complete metric spaces.
  \item   If $(X, d)$ is compact, then $\left(\mathcal{K}^{*}(X), h\right)$ is compact and in this case $\mathcal{B}^{*}(X)=\mathcal{K}^{*}(X)$.  {
  \item   If $(X, d)$ is separable, then  $\left(\mathcal{K}^{*}(X), h\right)$ is separable.
  \item If $H$ and $K$ are two nonempty subsets of $X$ then $h(H, K)=h(\bar{H}, \bar{K})$.
  \item If $\left(H_{\theta}\right)_{\theta \in \Theta}$ and $\left(K_{\theta}\right)_{\theta \in \Theta}$ are two families of nonempty subsets of $X$ then
$$
h\left(\bigcup_{\theta \in \Theta} H_{\theta}, \bigcup_{\theta \in \Theta} K_{\theta}\right)=h\left(\bigcup_{\theta \in \Theta} H_{\theta}, \overline{\bigcup_{\theta \in \Theta} K_{\theta}}\right) \leq \sup _{\theta \in \Theta} h\left(H_{\theta}, K_{\theta}\right).
$$
  \item If $H$ and $K$ are two nonempty subsets of $X$ and $\psi: X \rightarrow X$ is a Lipschitz function then $h(\psi(K), \psi(H)) \leq \operatorname{Lip}(\psi) \cdot h(K, H)$.}
\end{enumerate}
\end{theorem}

\begin{definition}\label{def:bounded-family} A family of continuous functions $\left(\psi_{\theta}\right)_{\theta \in \Theta}, \psi_{\theta}: X \to X$  is said to be \emph{bounded} if for every bounded set $A \subset X$ {,} the set $\bigcup_{\theta \in \Theta} \psi_{\theta}(A)$ is bounded.
\end{definition}

\begin{definition}\label{def:IIFS} An \emph{infinite iterated function system} (IIFS in short) on $X$ consists of a bounded family of continuous functions $\left(\psi_{\theta}\right)_{\theta \in \Theta}$ on $X$,  {and it is denoted} by $\mathcal{R}=\left(X,\left(\psi_{\theta}\right)_{\theta \in \Theta}\right)$. When $\Theta$ is finite we obtain the classical notion  of an iterated function system (IFS).
\end{definition}

\begin{definition}\label{def:fractal-op-IIFS} For an IIFS $\mathcal{R}=\left(X,\left(\psi_{\theta}\right)_{\theta \in \Theta}\right)$, the {\emph{fractal operator}(or Hutchinson-Barnsley operator)} $F_{\mathcal{R}}: \mathcal{B}^{*}(X) \rightarrow \mathcal{B}^{*}(X)$ is the function defined by $F_{\mathcal{R}}(A)=\overline{\bigcup_{\theta \in \Theta} \psi_{\theta}(A)}$ for every $A \in \mathcal{B}^{*}(X)$.
\end{definition}
The closure in the above definition is necessary because an arbitrary union of closed sets may not be closed.
\begin{remark} {
    {It is a classical result in the Hutchinson-Barnsley theory that if the functions} $\psi_{\theta}$ are contractions, for every $\theta \in \Theta$ with $\operatorname{sup}_{\theta \in \Theta} \operatorname{Lip}\left(\psi_{\theta}\right)<1$, then the function $F_{\mathcal{R}}$ is a contraction and verifies $\operatorname{Lip}\left(F_{\mathcal{R}}\right) \leq \sup _{\theta \in \Theta} \operatorname{Lip}\left(\psi_{\theta}\right)<1$.}
\end{remark}
 {From the contractivity of $F_{\mathcal{R}}$ we obtain, via fixed point theorem for contractions, the following existence result (see \cite[Theorem 1.2]{Dumitru2013AttractorsOI} or \cite[Theorem 3.2 and Theorem 4.1]{Lewellen-93} for details):}
\begin{theorem} \label{thm:exist-attract-IIFS}
Let $(X, d)$ be a complete metric space and $\mathcal{R}=\left(X,\left(\psi_{\theta}\right)_{\theta \in \Theta}\right)$  {an  IIFS, such that $\displaystyle\alpha=\sup _{\theta \in \Theta} \operatorname{Lip}\left(\psi_{\theta}\right)<1$.} Then there exists a unique set $A_{\mathcal{R}} \in \mathcal{B}^{*}(X)$ such that $F_{\mathcal{R}}(A_{\mathcal{R}})=A_{\mathcal{R}}$  {and } for any $A_{0} \in \mathcal{B}^{*}(X)$ the sequence $\left(A_{k}\right)_{k \in \mathbb{N}}$ defined by $A_{k+1}=F_{\mathcal{R}}\left(A_{k}\right)$ is convergent to $A_{\mathcal{R}}$ {with respect to} the metric $h$.  {Moreover, for every $k \in \mathbb{N}$, we have}, $ h\left(A_{k}, A_{\mathcal{R}}\right) \leq \frac{\alpha^{k}}{1-\alpha}\; h\left(H_{0}, H_{1}\right)$.
\end{theorem}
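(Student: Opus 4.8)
The plan is to derive the entire statement from the Banach fixed point theorem applied to the fractal operator $F_{\mathcal{R}} \colon \mathcal{B}^{*}(X) \to \mathcal{B}^{*}(X)$. By Theorem~\ref{thm:metric-spaces-of-sets}(1)--(2), completeness of $(X,d)$ makes $(\mathcal{B}^{*}(X), h)$ a complete metric space on which $h$ is finite-valued, so only two things need to be checked: that $F_{\mathcal{R}}$ is a well-defined self-map of $\mathcal{B}^{*}(X)$, and that it is a contraction with ratio at most $\alpha$.

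First I would verify well-definedness. Given $A \in \mathcal{B}^{*}(X)$, the set $\bigcup_{\theta \in \Theta} \psi_{\theta}(A)$ is nonempty since $A \neq \varnothing$, and it is bounded precisely because $(\psi_{\theta})_{\theta \in \Theta}$ is a bounded family in the sense of Definition~\ref{def:bounded-family}; hence its closure $F_{\mathcal{R}}(A) = \overline{\bigcup_{\theta \in \Theta} \psi_{\theta}(A)}$ is a nonempty, bounded, closed set, i.e. an element of $\mathcal{B}^{*}(X)$. This is the only place where the boundedness hypothesis on the family is genuinely needed, and it is also why one works with $\mathcal{B}^{*}(X)$ rather than $\mathcal{K}^{*}(X)$: for an infinite index set the closed bounded set $F_{\mathcal{R}}(A)$ need not be compact even when $A$ is.

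Next I would estimate $h(F_{\mathcal{R}}(A), F_{\mathcal{R}}(B))$ for $A, B \in \mathcal{B}^{*}(X)$ via Proposition~\ref{prop:hausd-property}. Part (1) lets me drop both closures, so $h(F_{\mathcal{R}}(A), F_{\mathcal{R}}(B)) = h(\bigcup_{\theta} \psi_{\theta}(A), \bigcup_{\theta} \psi_{\theta}(B))$; part (2) bounds this by $\sup_{\theta \in \Theta} h(\psi_{\theta}(A), \psi_{\theta}(B))$; and part (3) bounds each term by $\operatorname{Lip}(\psi_{\theta})\, h(A,B) \le \alpha\, h(A,B)$. Taking the supremum gives $h(F_{\mathcal{R}}(A), F_{\mathcal{R}}(B)) \le \alpha\, h(A,B)$, so $F_{\mathcal{R}}$ is a contraction on $(\mathcal{B}^{*}(X), h)$ with $\operatorname{Lip}(F_{\mathcal{R}}) \le \alpha < 1$, as already recorded in Definition~\ref{def:fractal-op-IIFS}.

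Finally, the Banach fixed point theorem gives a unique $A_{\mathcal{R}} \in \mathcal{B}^{*}(X)$ with $F_{\mathcal{R}}(A_{\mathcal{R}}) = A_{\mathcal{R}}$, the convergence $A_{k} \to A_{\mathcal{R}}$ with respect to $h$ for every initial $A_{0} \in \mathcal{B}^{*}(X)$, and the a priori error estimate $h(A_{k}, A_{\mathcal{R}}) \le \frac{\alpha^{k}}{1-\alpha}\, h(A_{0}, A_{1})$. I do not expect any real obstacle: the argument is essentially bookkeeping, and the only subtle point — that $F_{\mathcal{R}}$ does not leave $\mathcal{B}^{*}(X)$ — is handled by the boundedness assumption together with the closure built into the definition of $F_{\mathcal{R}}$.
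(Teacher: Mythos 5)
Your proposal is correct and follows the standard route that the paper itself relies on: the paper states this result as a citation of \cite{Dumitru2013AttractorsOI} (with the contraction property $\operatorname{Lip}(F_{\mathcal{R}})\leq\alpha$ already recorded in Definition~\ref{def:fractal-op-IIFS}) rather than proving it, and your argument --- well-definedness of $F_{\mathcal{R}}$ on $\mathcal{B}^{*}(X)$ from the boundedness of the family, the contraction estimate via Proposition~\ref{prop:hausd-property}(1)--(3), and then the Banach fixed point theorem on the complete space $(\mathcal{B}^{*}(X),h)$ --- is exactly the intended proof, including the error estimate (where the statement's $H_0,H_1$ should indeed read $A_0,A_1$).
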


\begin{definition}
   The unique set $A_{\mathcal{R}} \in \mathcal{B}^{*}(X)$, given by Theorem~\ref{thm:exist-attract-IIFS} is called the  {attractor of the IIFS $\mathcal{R}$.}
\end{definition}

\section{Hutchinson-Barnsley theory for GIFS using the induced  IIFS}\label{sec:reduction}

\subsection{Attractors of GIFS}
 {We consider the maximum distance in $X^{m}$, that is, given $x,y \in X^{m}$ we have $d_{m}(x,y)=\max_{1\leq i \leq m} d(x_{i},y_{i}).$ Given a function $\phi:X^{m} \to X$, we define
$$
\operatorname{Lip}(\phi)=\inf\{k \; : \;  d(F(x), F(y)) \leq k \; d_{m}(x, y) \text{ for all } x, y \in X^{m}\}.
$$
When $\operatorname{Lip}(\phi)< \infty$  say that that $\phi$ is a \emph{Lipschitz function} and $\operatorname{Lip}(\phi)$ is the \emph{Lipschitz constant} of $\phi$. If $\operatorname{Lip}(\phi)<1$, then $\phi$ is called a \emph{Lipschitz contraction}.}

 {Generalized iterated function systems were introduced  {in} \cite{Mihail2008} as follows:
\begin{definition}\label{def:GIFS}
    Let $m \in \mathbb{N}^{*}$. A \emph{generalized iterated function system} (GIFS, for short) on $X$, of order $m$, denoted by $\mathcal{S}=\left(X^{m}, \left(\phi_{j}\right)_{j\in\{1, ..., n\}}\right)$, consists of a finite family of Lipschitz maps $\phi_{j}: X^{m} \rightarrow X$. The \emph{generalized Hutchinson operator} $F_{\mathcal{S}}:(\mathcal{B}^{*}(X))^m \to \mathcal{B}^{*}(X)$ is given by
$$
F_{\mathcal{S}}(A_{1} , \ldots, A_{m})= \bigcup_{j\in\{1,...,n\}} \phi_{j}(A_{1} \times \ldots \times A_{m}),
$$
for every $A_{1} , \ldots, A_{m} \in \mathcal{B}^{*}(X)$.
\end{definition}}

\begin{theorem}[\cite{Mihail2008} or \cite{Strobin2013}] \label{thm:GIFS-attractor-existence}
Let $X$ be a complete metric space and $$\mathcal{S}=\left(X^m, (\phi_j)_{j\in\{1,...,n\}}\right)$$ be a GIFS of order $m$  {consisting of Lipschitz contractions}. Then there exists a unique $A_{\mathcal{S}} \in \mathcal{K}^{*}(X)$ such that
$$
F_{\mathcal{S}}(A_{\mathcal{S}}, \ldots, A_{\mathcal{S}})=\bigcup_{j\in\{1,...,n\}} \phi_{j}(
\stackrel{m \; \rm{ times}}
{\overbrace{ {A_{\mathcal{S}} \times \ldots \times A_{\mathcal{S}}}}})=A_{\mathcal{S}}.
$$
Moreover, for any $A_{0}, \ldots, A_{m-1} \in \mathcal{K}^{*}(X)$, the sequence $\left(A_{k}\right)_{k\in \mathbb{N}}$, defined by $$A_{k+m}:=F_{\mathcal{S}}\left(A_{k}, \ldots, A_{k+m-1}\right),$$ $k \in \mathbb{N}$ converges to $A_{\mathcal{S}}$.
\end{theorem}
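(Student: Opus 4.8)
The plan is to run the classical Banach-type scheme in the complete metric space $\left(\mathcal{K}^{*}(X),h\right)$ — complete by Theorem~\ref{thm:metric-spaces-of-sets}(2) — with the one-variable Banach fixed point theorem replaced by a Pre\v{s}i\'c-type fixed point theorem for maps of $m$ variables. Throughout I equip $X^{m}$ with the maximum metric $d_{m}\bigl((x_{1},\dots,x_{m}),(y_{1},\dots,y_{m})\bigr)=\max_{1\le i\le m}d(x_{i},y_{i})$ and write $c:=\max_{1\le j\le n}\operatorname{Lip}(\phi_{j})<1$.

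First I would check that $F_{\mathcal{S}}$ maps $\left(\mathcal{K}^{*}(X)\right)^{m}$ into $\mathcal{K}^{*}(X)$: for $A_{1},\dots,A_{m}\in\mathcal{K}^{*}(X)$ the product $A_{1}\times\cdots\times A_{m}$ is nonempty and compact in $X^{m}$, each continuous $\phi_{j}$ carries it to a nonempty compact set, and a finite union of such sets is again nonempty and compact; no closure is needed because the union is finite (the one point where the situation is genuinely simpler than for IIFS). Next I would establish the key estimate
$$
h\bigl(F_{\mathcal{S}}(A_{1},\dots,A_{m}),F_{\mathcal{S}}(B_{1},\dots,B_{m})\bigr)\ \le\ c\,\max_{1\le i\le m}h(A_{i},B_{i}).
$$
By Proposition~\ref{prop:hausd-property}(2) it suffices to bound $h\bigl(\phi_{j}(A_{1}\times\cdots\times A_{m}),\phi_{j}(B_{1}\times\cdots\times B_{m})\bigr)$ by $\operatorname{Lip}(\phi_{j})\max_{i}h(A_{i},B_{i})$ for each $j$; since Proposition~\ref{prop:hausd-property}(3) is stated only for self-maps $\psi\colon X\to X$, I would prove this product version by hand: given $a=(a_{1},\dots,a_{m})$ with $a_{i}\in A_{i}$ and $\varepsilon>0$, pick $b_{i}\in B_{i}$ with $d(a_{i},b_{i})\le d(a_{i},B_{i})+\varepsilon\le h(A_{i},B_{i})+\varepsilon$, so that $d\bigl(\phi_{j}(a),\phi_{j}(b)\bigr)\le\operatorname{Lip}(\phi_{j})\bigl(\max_{i}h(A_{i},B_{i})+\varepsilon\bigr)$, then take suprema over $a$, let $\varepsilon\to0$, and use the symmetric inequality.

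It then remains to prove the abstract statement: if $(Y,\rho)$ is complete and $G\colon Y^{m}\to Y$ satisfies $\rho\bigl(G(y_{1},\dots,y_{m}),G(z_{1},\dots,z_{m})\bigr)\le c\max_{i}\rho(y_{i},z_{i})$ with $c<1$, then $G$ has a unique diagonal fixed point $y^{*}$ (meaning $G(y^{*},\dots,y^{*})=y^{*}$) and every recursion $y_{k+m}=G(y_{k},\dots,y_{k+m-1})$ converges to $y^{*}$. Setting $M_{k}:=\max\{\rho(y_{k},y_{k+1}),\dots,\rho(y_{k+m-1},y_{k+m})\}$, the recursion and the hypothesis give $\rho(y_{k+m},y_{k+m+1})\le cM_{k}$, hence $M_{k+1}\le M_{k}$ for all $k$ and, iterating this bound across a full block of $m$ consecutive indices, $M_{k+m}\le cM_{k}$; therefore $M_{k}\le c^{\lfloor k/m\rfloor}M_{0}\to0$, so $\rho(y_{k},y_{k+1})\le M_{k}$ is summable, $(y_{k})$ is Cauchy and converges to some $y^{*}$, and continuity of $G$ (immediate from its Lipschitz-type bound) applied to $y_{k+m}=G(y_{k},\dots,y_{k+m-1})$ forces $G(y^{*},\dots,y^{*})=y^{*}$; uniqueness follows from $\rho(y^{*},w^{*})=\rho\bigl(G(y^{*},\dots,y^{*}),G(w^{*},\dots,w^{*})\bigr)\le c\,\rho(y^{*},w^{*})$. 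Applying this with $(Y,\rho)=\left(\mathcal{K}^{*}(X),h\right)$ and $G=F_{\mathcal{S}}$ yields $A_{\mathcal{S}}$ together with the asserted convergence of $(A_{k})_{k\in\mathbb{N}}$, and the same computation delivers a geometric convergence-rate estimate analogous to the one in Theorem~\ref{thm:exist-attract-IIFS}.

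The main obstacle is the abstract Pre\v{s}i\'c-type step, and within it the index bookkeeping needed to upgrade the weak monotonicity $M_{k+1}\le M_{k}$ to the genuine geometric decay $M_{k+m}\le cM_{k}$; everything else (well-definedness, the Hausdorff estimate, the final substitution) is routine once $X^{m}$ carries the maximum metric and the auxiliary quantity $M_{k}$ is introduced.
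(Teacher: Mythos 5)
This statement is not proved in the paper: it is imported verbatim from the literature (Mihail--Miculescu for Lipschitz contractions, Strobin--Swaczyna for Matkowski contractions), so there is no internal proof to compare against. Your blind proof is correct and self-contained, and it follows essentially the same route as the cited original argument: verify that $F_{\mathcal{S}}$ maps $\left(\mathcal{K}^{*}(X)\right)^{m}$ into $\mathcal{K}^{*}(X)$ (no closure needed, the union being finite), prove the product-space Hausdorff estimate $h\bigl(F_{\mathcal{S}}(A_{1},\dots,A_{m}),F_{\mathcal{S}}(B_{1},\dots,B_{m})\bigr)\le c\max_{i}h(A_{i},B_{i})$ by hand (your $\varepsilon$-selection argument is fine, and with $B_{i}$ compact the infima are even attained), and then invoke a Pre\v{s}i\'c-type fixed point theorem in the complete space $\left(\mathcal{K}^{*}(X),h\right)$. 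The bookkeeping in the abstract step is sound: $\rho(y_{k+m},y_{k+m+1})\le cM_{k}$ gives $M_{k+1}\le M_{k}$, and combining the two yields $M_{k+m}\le cM_{k}$, hence $M_{k}\le c^{\lfloor k/m\rfloor}M_{0}$, summability of the increments, Cauchyness, and passage to the limit by continuity of $G$; uniqueness of the diagonal fixed point is immediate. The only nuance worth flagging is that you silently fix the maximum metric on $X^{m}$ when defining $\operatorname{Lip}(\phi_{j})$; this matches the convention the paper uses implicitly (see the estimates in Lemma~\ref{lem:prop-induced_IIFS}), so it is consistent. Note also that the paper's remark following the theorem (the diagonal operator $\bar{F}_{\mathcal{S}}$ being a Banach contraction on $\mathcal{K}^{*}(X)$) would give existence and uniqueness more quickly, but it does not by itself yield convergence of the general $m$-term recursion $A_{k+m}=F_{\mathcal{S}}(A_{k},\dots,A_{k+m-1})$ from arbitrary $A_{0},\dots,A_{m-1}$; your Pre\v{s}i\'c-type lemma is exactly what supplies that stronger conclusion.
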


The unique compact set $A_{\mathcal{S}}$, given by Theorem~\ref{thm:GIFS-attractor-existence} is  {called} the fractal attractor  {of} the GIFS $\mathcal{S}$.

As pointed in the proof of Theorem 3.4 in \cite{Mihail2008}, we could consider in the above theorem a slightly simpler version of the fractal operator $F_{\mathcal{S}}$ as
$$
\bar{F}_{\mathcal{S}}(A)= \bigcup_{j\in\{1,...,n\}} \phi_{j}(\stackrel{m \; \rm{ times}}{\overbrace{ A \times \ldots \times A}}),
$$
for every $A \in \mathcal{B}^{*}(X)$. We notice that in the Lipschitz case, $\displaystyle\operatorname{Lip}\left(\bar{F}_{\mathcal{S}}\right) \leq \sup _{j\in\{1,...,n\}} \operatorname{Lip}\left(\phi_{j}\right)<1$.

\begin{definition}\label{def:induced-IIFS} Let $\mathcal{S}=\left(X^{m},\left(\phi_{j}\right)_{j\in\{1, ..., n\}}\right)$ be a GIFS (of order  {$m \geq 2$}). Given a set $B \in \mathcal{B}^*(X)$ we define the \emph{IIFS induced by $B$}  {with respect to} $\mathcal{S}$, as the IIFS $\mathcal{R}_{B}=(X, (\psi_{\theta})_{\theta\in \Theta}),$ where {$\Theta=B^{m-1}\times \{1,...,n\}$ and $\psi_{\theta}(x)=\phi_j(x,b_{2}, ...,b_{m} )$, for $\theta=(b_{2}, ...,b_{m},j)  \in \Theta$.}
\end{definition}
We want to study $\mathcal{S}$ from an iteration point of view by  approximating it by the induced IIFS $\mathcal{R}_{B}$. In \cite{Oliveira2017} we made an attempt of study a finite skill IFS whose attractor describes some part of the behaviour of the original GIFS  { but its attractor can be strictly contained} in the GIFS's attractor, ending with a new kind of attractor associated to a GIFS.
\begin{lemma}\label{lem:prop-induced_IIFS} Let $\mathcal{S}=(X^{m}, (\phi_j)_{j\in\{1,...,n\}})$ be a GIFS and $\mathcal{R}_{B}=(X, (\psi_{\theta})_{\theta\in \Theta}),$ be the IIFS induced by the set $B \in \mathcal{B}^*(X)$.  {Then
\begin{enumerate}
  \item The IIFS $\mathcal{R}_{B}$ is bounded;
  \item $\sup _{\theta \in \Theta} \operatorname{Lip}\left(\psi_{\theta}\right)<1$;
  \item  {If $B=A_{\mathcal{S}}$, then $F_{\mathcal{R}_B}\left(A_{\mathcal{S}}\right)=A_{\mathcal{S}}$. In particular, $A_{\mathcal{S}}$ is the attractor of $\mathcal{R}_{A_{\mathcal{S}}}$};
  \item $F_{\mathcal{R}_{B}}(A)=\bigcup_{j\in\{1,...,n\}}  {\phi_{j}(A\times \stackrel{m-1 \; \rm{ times}}{\overbrace{B\times \ldots \times B}})}=F_{\mathcal{S}}(A ,  {B, \ldots, B})$, for every\\ $A \in \mathcal{B}^{*}(X)$.
\end{enumerate}}
\end{lemma}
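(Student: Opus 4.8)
The plan is to verify each of the four assertions in turn, all of which follow fairly directly from Definition~\ref{def:induced-IIFS} together with the standing hypothesis that $\phi_1,\dots,\phi_n$ are contractions and the basic properties collected in Proposition~\ref{prop:hausd-property}. Throughout, fix $B\in\mathcal B^*(X)$, write $\Theta=B^{m-1}\times\{1,\dots,n\}$, and for $\theta=(b,j)$ with $b=(b_2,\dots,b_m)\in B^{m-1}$ recall $\psi_\theta(x)=\phi_j(x,b_2,\dots,b_m)$.

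First I would prove (2), since it also underlies (1). Each $\phi_j$ is a contraction on $X^m$; if $X^m$ carries, say, the maximum (or sum) metric, then for fixed $b_2,\dots,b_m$ the partial map $x\mapsto\phi_j(x,b_2,\dots,b_m)$ has Lipschitz constant at most $\operatorname{Lip}(\phi_j)$, because moving only the first coordinate changes the product-metric distance by exactly the distance moved in that coordinate. Hence $\operatorname{Lip}(\psi_\theta)\le\operatorname{Lip}(\phi_j)$ for every $\theta=(b,j)$, so $\sup_{\theta\in\Theta}\operatorname{Lip}(\psi_\theta)\le\max_{1\le j\le n}\operatorname{Lip}(\phi_j)<1$, the last inequality because there are only finitely many $\phi_j$, each a contraction. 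For (1), let $A\subset X$ be bounded; then $A\times B^{m-1}\subset X^m$ is bounded, and since each $\phi_j$ is Lipschitz it maps bounded sets to bounded sets, so $\phi_j(A\times B^{m-1})$ is bounded for each $j$; as $\bigcup_{\theta\in\Theta}\psi_\theta(A)=\bigcup_{j=1}^n\phi_j(A\times B^{m-1})$ is a finite union of bounded sets, it is bounded, which is exactly the boundedness of the family $(\psi_\theta)_{\theta\in\Theta}$ in the sense of Definition~\ref{def:bounded-family}.

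Next I would establish (4), which is essentially a bookkeeping identity and from which (3) follows. For $A\in\mathcal B^*(X)$,
$$
\bigcup_{\theta\in\Theta}\psi_\theta(A)=\bigcup_{j=1}^n\bigcup_{b\in B^{m-1}}\{\phi_j(x,b_2,\dots,b_m):x\in A\}=\bigcup_{j=1}^n\phi_j\bigl(A\times \overbrace{B\times\cdots\times B}^{m-1}\bigr),
$$
which is the set denoted $F_{\mathcal S}(A,B^{m-1})$ in the statement. One must be slightly careful about the closure in the definition of $F_{\mathcal R_B}$: $F_{\mathcal R_B}(A)=\overline{\bigcup_{\theta\in\Theta}\psi_\theta(A)}$, so strictly speaking the claimed identity should be read up to closure, or one observes that when $A$ and $B$ are compact the union over the finitely many $\phi_j$ of the continuous images $\phi_j(A\times B^{m-1})$ of compacta is already closed. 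I would state this small point explicitly to keep the later use of the lemma clean.

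Finally, (3) is the special case $B=A_{\mathcal S}$ of (4) combined with Theorem~\ref{thm:GIFS-attractor-existence}: taking $A=B=A_{\mathcal S}$ gives $F_{\mathcal R_{A_{\mathcal S}}}(A_{\mathcal S})=\bigcup_{j=1}^n\phi_j(A_{\mathcal S},\dots,A_{\mathcal S})=F_{\mathcal S}(A_{\mathcal S},\dots,A_{\mathcal S})=A_{\mathcal S}$, where the last equality is the fixed-point property of the GIFS attractor; here $A_{\mathcal S}$ is compact, so no closure issue arises. I do not expect any genuine obstacle; the only point requiring a little care is the interaction with the product metric on $X^m$ in (2) — the bound $\operatorname{Lip}(\psi_\theta)\le\operatorname{Lip}(\phi_j)$ holds for the standard product metrics but should be stated for whichever convention the paper adopts — and the closure subtlety in (4) noted above.
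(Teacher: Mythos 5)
Your proof is correct and follows essentially the same route as the paper: direct verification of the Lipschitz bound for the partial maps $\psi_\theta$, boundedness via the Lipschitz property of the finitely many $\phi_j$, the bookkeeping identity $\bigcup_{\theta\in\Theta}\psi_\theta(A)=\bigcup_{j=1}^{n}\phi_j(A,B^{m-1})$, and the GIFS fixed-point property for item (3) (the paper writes the argument for $m=2$ and spells out the boundedness estimate with explicit centers, but the content is the same). Your explicit caveat about the closure in item (4) is in fact handled more carefully than in the paper, which dismisses it as a ``finite union of closed sets'' even though $\phi_j(A\times B^{m-1})$ need not be closed for merely closed bounded $A,B$; as you note, the identity up to closure (or Proposition~\ref{prop:hausd-property}(1)) suffices for all later uses.
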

\begin{proof} The proof will be for $m=2$ in order to avoid unnecessarily  complex notation. Note that, from Definition~\ref{def:fractal-op-IIFS} we get
$\displaystyle F_{\mathcal{R}_{B}}(A)=\overline{\bigcup_{\theta \in \Theta} \psi_{\theta}(A)},$ for every $A \in \mathcal{B}^{*}(X)$.\\
(1) From Definition~\ref{def:bounded-family} we must show that for every bounded set $A \subset X$ the set $\bigcup_{\theta \in \Theta} \psi_{\theta}(A)$ is bounded. As $A$ is bounded we can find $a_0 \in A$ and $M_A>0$ such that $d(a,a_0) \leq  M_A$, for any $a \in A$. Analogously, we can find $b_0 \in B$ and $M_B>0$ such that $d(b,b_0) \leq  M_B$, for any $b \in B$. For each $j \in \{1,...,n\}$ we define $c_j=\psi_{(b_0,j)}(a_0)=\phi_{j}(a_0, b_0)$. For an arbitrary  {$\theta \in \Theta$ and $z \in \psi_{\theta}(A)$} we have $z=\phi_{j}(a, b)$, for some $j, a, b$ in the respective sets. It is easy to see that
$$d(z, c_j)=  d(\phi_{j}(a, b), \phi_{j}(a_0, b_0)) \leq$$ $$\leq  \operatorname{Lip}\left(\phi_{j}\right) \max( d(a, a_0), d(b, b_0)) \leq \operatorname{Lip}\left(F_{\mathcal{S}}\right) \max( M_A, M_B).$$
Let $\displaystyle M=\max_{ j\in\{1,...,n\}} d(c_1, c_j)$.  {Then, }
$$d(z, c_1) \leq d(z, c_j) + d(c_1, c_j)\leq \operatorname{Lip}\left(F_{\mathcal{S}}\right) \max( M_A, M_B)+M.$$
(2) Consider an arbitrary $\theta=(b,j) \in \Theta$ and $x,y \in X$ {. Then}
$$d(\psi_{\theta}(x), \psi_{\theta}(y))  {=} d(\phi_{j}(x, b), \phi_{j}(y, b)) \leq \operatorname{Lip}\left(\phi_{j}\right) \max( d(x, y), d(b, b))\leq  \operatorname{Lip}\left(F_{\mathcal{S}}\right) d(x, y),$$
that is, $\operatorname{Lip}\left(\psi_{\theta}\right) \leq \operatorname{Lip}\left(F_{\mathcal{S}}\right) $ for all $\theta \in \Theta$, thus
$\sup _{\theta \in \Theta} \operatorname{Lip}\left(\psi_{\theta}\right) \leq \operatorname{Lip}\left(F_{\mathcal{S}}\right)<1$, as we claimed.\\
(3) Given $B=A_{\mathcal{S}} \in\mathcal{K}^{*}(X) \subset \mathcal{B}^{*}(X)$ we know that
$$F_{\mathcal{R}_{B}}(A_{\mathcal{S}})=\overline{\bigcup_{\theta \in \Theta} \psi_{\theta}(A_{\mathcal{S}})}= \overline{\bigcup_{b \in A_{\mathcal{S}}, \; j\in\{1,...,n\}  } \phi_{j}(A_{\mathcal{S}}\times \{b\})}= \overline{\bigcup_{j\in\{1,...,n\}  } \phi_{j}( {A_{\mathcal{S}}\times A_{\mathcal{S}}})}.  $$
Since $A_{\mathcal{S}}$ is the attractor of a GIFS we know that $\bigcup_{j\in\{1,...,n\}  } \phi_{j}( {A_{\mathcal{S}}\times A_{\mathcal{S}}}) = F_{\mathcal{S}}(A_{\mathcal{S}})= A_{\mathcal{S}} \in \mathcal{K}^{*}(X)$. Substituting that in the previous computation and using the fact that $ \overline{A_{\mathcal{S}}}= A_{\mathcal{S}}$ we obtain
$$F_{\mathcal{R}_{B}}(A_{\mathcal{S}})=A_{\mathcal{S}}.$$

(4) Given $A \in \mathcal{B}^{*}(X)$ we know that
$$F_{\mathcal{R}_{B}}(A)=\overline{\bigcup_{\theta \in \Theta} \psi_{\theta}(A)}= \overline{\bigcup_{b \in B, \; j\in\{1,...,n\}  } \phi_{j}(A \times \{b\})}= \overline{\bigcup_{j\in\{1,...,n\}  } \phi_{j}( {A \times B})}=F_{\mathcal{S}}(A , B),$$ where we can get rid of the closure because we have a finite union of closed sets.
\end{proof}

From Lemma~\ref{lem:prop-induced_IIFS} (1)-(2) and Theorem~\ref{thm:exist-attract-IIFS}  we conclude that, for an arbitrary $B\in \mathcal{B}^{*}(X)$, the induced IIFS always has an attractor, denoted  $A_{\mathcal{R}_{B}}$. From this property we can define the evaluation map  {with respect to} a given GIFS $\mathcal{S}$:
\begin{definition}\label{def:evaluation-map} Let $\mathcal{S}=(X^{m}, (\phi_j)_{j\in\{1,...,n\}})$ be  {a GIFS. Define the} \emph{evaluation map} $ev_{\mathcal{S}}: \mathcal{B}^*(X) \to \mathcal{B}^*(X)$ by
\begin{equation}\label{eq:eval-map}
  ev_{\mathcal{S}}(B)= A_{\mathcal{R}_{B}} \in \mathcal{B}^*(X),
\end{equation}
for every $B \in \mathcal{B}^*(X)$, where $A_{\mathcal{R}_{B}}$ is the attractor of the induced IIFS $\mathcal{R}_{B}$.
\end{definition}

\begin{theorem}\label{thm:main-thm}
  Let $\mathcal{S}=(X^{m}, (\phi_j)_{j\in\{1,...,n\}})$ be a GIFS consisting of Lipschitz contractive maps. The evaluation map is a Lipschitz  contraction with $\operatorname{Lip}\left(ev_{\mathcal{S}}\right) \leq \operatorname{Lip}\left(F_{\mathcal{S}}\right) <1$, and for any $B_{0} \in \mathcal{B}^*(X)$, the sequence $\left(B_k\right)$ defined by $B_{k+1}=e v_S\left(B_k\right)$ for $k \geq 0$, converges to $A_{\mathcal{S}}$  in $(\mathcal{B}^*(X), h)$.
\end{theorem}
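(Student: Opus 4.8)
The plan is to reduce everything to the contraction mapping principle on $(\mathcal{B}^{*}(X),h)$, which is a complete metric space by Theorem~\ref{thm:metric-spaces-of-sets}(2) since $(X,d)$ is complete. So it suffices to prove two facts: that $ev_{\mathcal{S}}$ is a contraction with $\operatorname{Lip}(ev_{\mathcal{S}})\le\operatorname{Lip}(F_{\mathcal{S}})$, and that $A_{\mathcal{S}}$ is a fixed point of $ev_{\mathcal{S}}$. First I would record what Section~\ref{sec:reduction} already supplies: by Lemma~\ref{lem:prop-induced_IIFS}(1)--(2) and Theorem~\ref{thm:exist-attract-IIFS} the map $ev_{\mathcal{S}}(B)=A_{\mathcal{R}_{B}}$ is well defined with values in $\mathcal{B}^{*}(X)$; and by Lemma~\ref{lem:prop-induced_IIFS}(4) together with the uniqueness clause of Theorem~\ref{thm:exist-attract-IIFS}, $A_{\mathcal{R}_{B}}$ is the \emph{unique} element of $\mathcal{B}^{*}(X)$ satisfying $A_{\mathcal{R}_{B}}=F_{\mathcal{S}}(A_{\mathcal{R}_{B}},B^{m-1})$.

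Next I would establish the joint Lipschitz estimate for the GIFS operator: writing $\lambda:=\operatorname{Lip}(F_{\mathcal{S}})<1$, for all $A_{1},\ldots,A_{m},C_{1},\ldots,C_{m}\in\mathcal{B}^{*}(X)$ one has
\[
h\bigl(F_{\mathcal{S}}(A_{1},\ldots,A_{m}),F_{\mathcal{S}}(C_{1},\ldots,C_{m})\bigr)\le\lambda\max_{1\le i\le m}h(A_{i},C_{i}).
\]
This follows from Proposition~\ref{prop:hausd-property}(1) (so that the closures in $F_{\mathcal{S}}$ are harmless for $h$), from Proposition~\ref{prop:hausd-property}(2) applied to the finite union over $j\in\{1,\ldots,n\}$, from the evident variant for Lipschitz maps $X^{m}\to X$ of Proposition~\ref{prop:hausd-property}(3) applied to each contraction $\phi_{j}$ on $X^{m}$ with the maximum metric, and from the routine identity $h(A_{1}\times\cdots\times A_{m},C_{1}\times\cdots\times C_{m})=\max_{i}h(A_{i},C_{i})$ for that product metric; the resulting constant is $\sup_{j}\operatorname{Lip}(\phi_{j})=\operatorname{Lip}(F_{\mathcal{S}})$, in accordance with the remark following Theorem~\ref{thm:GIFS-attractor-existence}.

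Then comes the core computation. Fix $B,C\in\mathcal{B}^{*}(X)$ and set $A:=ev_{\mathcal{S}}(B)$, $A':=ev_{\mathcal{S}}(C)$. Substituting the two fixed point equations $A=F_{\mathcal{S}}(A,B^{m-1})$ and $A'=F_{\mathcal{S}}(A',C^{m-1})$ into the joint estimate gives
\[
h(A,A')=h\bigl(F_{\mathcal{S}}(A,B^{m-1}),F_{\mathcal{S}}(A',C^{m-1})\bigr)\le\lambda\max\bigl(h(A,A'),h(B,C)\bigr).
\]
Now the dichotomy: if $h(A,A')\ge h(B,C)$ the maximum is $h(A,A')$, so $h(A,A')\le\lambda\,h(A,A')$ and, as $\lambda<1$, this forces $h(A,A')=0$; hence in every case $h(A,A')\le h(B,C)$, the maximum equals $h(B,C)$, and the displayed inequality becomes $h(ev_{\mathcal{S}}(B),ev_{\mathcal{S}}(C))\le\lambda\,h(B,C)$. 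Therefore $\operatorname{Lip}(ev_{\mathcal{S}})\le\lambda=\operatorname{Lip}(F_{\mathcal{S}})<1$. Finally, Lemma~\ref{lem:prop-induced_IIFS}(3) gives $ev_{\mathcal{S}}(A_{\mathcal{S}})=A_{\mathcal{R}_{A_{\mathcal{S}}}}=A_{\mathcal{S}}$, so $A_{\mathcal{S}}$ is the (unique) fixed point of the contraction $ev_{\mathcal{S}}$, and the Banach fixed point theorem yields that $B_{k+1}=ev_{\mathcal{S}}(B_{k})$ converges to $A_{\mathcal{S}}$ in $(\mathcal{B}^{*}(X),h)$ for every $B_{0}\in\mathcal{B}^{*}(X)$.

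The step I expect to be the real obstacle is pinning down the \emph{sharp} contraction factor $\operatorname{Lip}(F_{\mathcal{S}})$. A naive argument that splits the perturbation of the first argument of $F_{\mathcal{S}}$ from that of the remaining $m-1$ arguments by a plain triangle inequality only gives $\operatorname{Lip}(ev_{\mathcal{S}})\le\operatorname{Lip}(F_{\mathcal{S}})/(1-\operatorname{Lip}(F_{\mathcal{S}}))$, which is $\ge 1$ as soon as $\operatorname{Lip}(F_{\mathcal{S}})\ge\tfrac12$. Handling both perturbations inside a single application of the joint estimate and then exploiting that $A$ and $A'$ are themselves fixed points — so the term $\lambda\,h(A,A')$ is absorbed by the dichotomy above — is what recovers the correct constant. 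A minor bookkeeping point is to stay consistent about the maximum metric on $m$-tuples, under which $\operatorname{Lip}(F_{\mathcal{S}})=\sup_{j}\operatorname{Lip}(\phi_{j})<1$.
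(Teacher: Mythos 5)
Your proposal is correct and follows essentially the same route as the paper: substitute the fixed-point identities $A_{\mathcal{R}_{B}}=F_{\mathcal{S}}(A_{\mathcal{R}_{B}},B^{m-1})$, apply the joint Hausdorff--Lipschitz estimate for $F_{\mathcal{S}}$ (which the paper derives via Proposition~\ref{prop:hausd-property} on the union representation of $F_{\mathcal{R}_{B}}$), resolve the $\max$ by the same dichotomy, and conclude with Lemma~\ref{lem:prop-induced_IIFS}(3), completeness of $(\mathcal{B}^{*}(X),h)$ and the Banach fixed point theorem. Your handling of the case $h(A,A')\ge h(B,C)$ (forcing $h(A,A')=0$ rather than declaring an outright contradiction) is in fact slightly cleaner than the paper's wording, but the argument is the same.
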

\begin{proof}
   {Let $B, B' \in\mathcal{B}^*(X)$ be such that $A_{\mathcal{R}_B} \neq A_{\mathcal{R}_{B^{\prime}}}$. From Lemma 3.4, item (4), we have:}
  $$A_{\mathcal{R}_{B}} = F_{\mathcal{R}_{B}}(A_{\mathcal{R}_{B}})= F_{\mathcal{S}}(A_{\mathcal{R}_{B}} , B, \ldots, B)$$
  and the same is true for $A_{\mathcal{R}_{B'}}$. Then,
  $$h(ev_{\mathcal{S}}(B), ev_{\mathcal{S}}(B')) = h(F_{\mathcal{S}}(A_{\mathcal{R}_{B}} , B, \ldots, B) ,\; F_{\mathcal{S}}(A_{\mathcal{R}_{B'}} , B', \ldots, B') )\leq $$
  $$\leq \operatorname{Lip}\left(F_{\mathcal{S}}\right) \max( h(A_{\mathcal{R}_{B}}, A_{\mathcal{R}_{B'}}), h(B, B'), ..., h(B, B')) = $$
  $$= \operatorname{Lip}\left(F_{\mathcal{S}}\right) \max( h(A_{\mathcal{R}_{B}}, A_{\mathcal{R}_{B'}}), h(B, B')).$$
  The last inequality follows from the fact $F_{\mathcal{S}}$ is Lipschitz with respect to the maximum distance in $\mathcal{B}^*(X)^m$.

  If $h(A_{\mathcal{R}_{B}}, A_{\mathcal{R}_{B'}}) \geq h(B, B')$ we obtain $h(A_{\mathcal{R}_{B}}, A_{\mathcal{R}_{B'}}) \leq  \operatorname{Lip}\left(F_{\mathcal{S}}\right) h(A_{\mathcal{R}_{B}}, A_{\mathcal{R}_{B'}})$, an absurd because $\operatorname{Lip}\left(F_{\mathcal{S}}\right)<1$. Thus $$\operatorname{Lip}\left(F_{\mathcal{S}}\right) \max( h(A_{\mathcal{R}_{B}}, A_{\mathcal{R}_{B'}}), h(B, B')) = \operatorname{Lip}\left(F_{\mathcal{S}}\right) h(B, B').$$ From this, we conclude that $h(ev_{\mathcal{S}}(B), ev_{\mathcal{S}}(B')) \leq \operatorname{Lip}\left(F_{\mathcal{S}}\right) h(B, B')$, in other words,  $$\operatorname{Lip}\left(ev_{\mathcal{S}}\right) \leq \operatorname{Lip}\left(F_{\mathcal{S}}\right) <1.$$   Since $ev_{\mathcal{S}}$ is a  Lipschitz contraction and from Theorem~\ref{thm:metric-spaces-of-sets} (2), $(\mathcal{B}^*(X), h)$ is complete the Banach contraction theorem claims that $B_{k+1}=ev_{\mathcal{S}}(B_{k})$ converges to the unique fixed point of $ev_{\mathcal{S}}$. On the other hand, from Lemma~\ref{lem:prop-induced_IIFS} (3) we know that $ev_{\mathcal{S}}(A_{\mathcal{S}})= A_{\mathcal{S}}$ proving that  {$(B_{k})$} converges to $A_{\mathcal{S}}$  with respect to the Hausdorff-Pompeiu metric $h$.
\end{proof}
\begin{remark}\label{rem:iteration-comparison}
    From a theoretical point of view, the iteration procedure $B_{k+1}=ev_{\mathcal{S}}(B_{k})$ from Theorem~\ref{thm:main-thm} can be seen as a kind of iteration  {for a usual GIFS}, but using the attractor of the induced IIFSs. To see that, we recall that, from Lemma~\ref{lem:prop-induced_IIFS} (4), given $B, B'\in \mathcal{B}^{*}(X)$ we know that $F_{\mathcal{R}_{B}}(B')=F_{\mathcal{S}}(B',  { B, \ldots, B}).$
    In particular, if $B'=A_{\mathcal{R}_{B}}$ then $F_{\mathcal{R}_{B}}(A_{\mathcal{R}_{B}})=A_{\mathcal{R}_{B}}$ and
    $A_{\mathcal{R}_{B}}=F_{\mathcal{S}}(A_{\mathcal{R}_{B}}, { B, \ldots, B}).$ In this way, given $B_{0} \in \mathcal{B}^*(X)$,  {we have:}\\
    $B_{1}=ev_{\mathcal{S}}(B_{0})= F_{\mathcal{S}}(A_{\mathcal{R}_{B_{0}}},  { B_{0}, \ldots, B_{0}})$;\\
    $B_{2}=ev_{\mathcal{S}}(B_{1})= F_{\mathcal{S}}(A_{\mathcal{R}_{B_{1}}},  { A_{\mathcal{R}_{B_{0}}}, \ldots, A_{\mathcal{R}_{B_{0}}}})$;\\
    $\cdots$\\
     {$ {B_{k+1}}=ev_{\mathcal{S}}(B_{k})= F_{\mathcal{S}}(A_{\mathcal{R}_{B_{k}}}, A_{\mathcal{R}_{B_{k-1}}}, \ldots, A_{\mathcal{R}_{B_{k-1}}})$.}\\
    On the other hand, by Theorem~\ref{thm:GIFS-attractor-existence} we know that for any $H_{0}, \ldots, H_{m-1} \in \mathcal{K}^{*}(X)$, the sequence $\left(H_{k}\right)_{k\in \mathbb{N}}$, defined by $H_{k+m}:=F_{\mathcal{S}}\left(H_{k}, \ldots, H_{k+m-1}\right),$ $k \in \mathbb{N}$ converges to $A_{\mathcal{S}}$.
\end{remark}

\subsection{Hutchinson (invariant) measures of GIFS}
 From now on we assume that $(X,d)$ is compact.  {The set of all Borel positive finite measures $\mu$ over the Borel sigma algebra of the metric space $(X,d)$ is denoted by $\mM(X)$. Recall that the support of a measure $\mu$ is given by $$\supp(\mu)=\{x \in X | \mu(U)>0 \text{ for any open neighborhood of } x\},$$ and that it is a closed subset of $X$.} Let $\mM_{1}(X)$  {be} the elements of $\mM(X)$ that are normalized ($\mu(X)=1$), that is, the set of all  {Borel} probability measures over $(X,d)$.

We introduce the Monge-Kantorovich metric $d_{MK}$ in $\mM_{1}(X)$ in the following way:
for every $\mu,\nu\in\mM_{1}(X)$, define
\begin{equation}
d_{MK}(\mu, \nu)=\sup \left\{\left|\int_{X} f \mathrm{d} \mu-\int_{X} f \mathrm{d} \nu\right| : f \in \operatorname{Lip}_{1}(X, \mathbb{R})\right\},
\end{equation}
where $\operatorname{Lip}_{1}(X, \mathbb{R})$ is the set of maps $f:X\to\R$ with $\on{Lip}(f)\leq 1$.  {In this case, the Monge-Kantorovich metric induces the topology of weak convergence of measures on $\mM_{1}(X)$ (see \cite{Bogachev207} for details).} From now on, we consider the complete metric  {spaces} $(\mM_{1}(X), d_{MK})$ or $(\mM_{1}(X)^{m}, d_{MK}^{m})$, where
$$d_{MK}^{m}((\mu_0,...,\mu_{m-1}), (\mu_0,...,\mu_{m-1}))=\max_{0\leq i \leq m-1} d_{MK}(\mu_{i}, \nu_{i}), \; m \geq 1.$$

 {A key improvement from the classical study of IFS with probabilities (IFSp for short) was given by Stenflo (see \cite[Remark 3]{Stenflo2002}), where random iterations are used to represent the iterations of a so called IFS with probabilities, $\mathcal{R}=(X, \psi_\theta, p)_{\theta \in \Theta}$ for an arbitrary measurable space $\Theta$. The approach here is slightly different.
\begin{definition}\label{def:IIFSp Mendivil}
       Let $\Theta$ be a compact set. An iterated function system with probabilities (IIFSp for short) $\mathcal{R}=(X, (\psi_{\theta})_{\theta\in \Theta}, p)$, is an IIFS endowed with a probability $p$ on $\Theta$, such that the map $(\theta, x) \to \psi_{\theta}(x)$ is continuous in both $\theta$ and $x$.
     \end{definition}}
We denote by $C(X,\R)$, the set of all continuous functions from $X$ to $\R$.
\begin{definition}\cite[Section 2]{mendivil1998generalization}
       Let  $\mathcal{R}=(X, (\psi_{\theta})_{\theta\in \Theta}, p)$ be an IIFSp. The \emph{transfer operator}\\ $L_{\mathcal{R}}: C(X,\R) \to C(X,\R)$ is given by
\begin{equation}\label{eq:stenflo_transf_op}
  L_{\mathcal{R}}(f)(x)=\int_{\Theta} f(\psi_{\theta}(x)) dp(\theta),
\end{equation}
for any $f \in C(X,\R)$.
     \end{definition}
  {\begin{definition}\cite[Section 2]{mendivil1998generalization}
        Let  $\mathcal{R}=(X, (\psi_{\theta})_{\theta\in \Theta}, p)$ be an IIFSp. The \emph{Markov operator} associated to $\mathcal{R}$ is the operator $M_{\mathcal{R}}: \mM_{1}(X) \to \mM_{1}(X)$ defined by:
\begin{equation}\label{eq:stenflo_markov_op}
  \int_{X} f(x) dM_{\mathcal{R}}(\nu) =\int_{X} L_{\mathcal{R}}(f)(x) d\nu,
\end{equation}
for any $f \in C(X,\R)$, $\nu \in \mM_{1}(X)$.
\end{definition}}

\begin{theorem} \cite[Theorem 1]{mendivil1998generalization} \label{thm:exist_measure_IIFS-Mendivil} Let $X$ and $\Theta$ be compact metric spaces and $\mathcal{R}=(X, (\psi_{\theta})_{\theta\in \Theta}, p)$, an IIFSp, which is contractive on average i.e., for all $x, y \in X$
$$
\int_{\Theta} d\left(\psi_{\theta}(x), \psi_{\theta}(y)\right) dp(\theta) \leq \lambda \, d(x, y)
$$
with $\lambda<1$, then  the Markov operator $M_{\mathcal{R}}: \mM_{1}(X) \to \mM_{1}(X)$ defined by
$$\int_{X} f(x) d M_{\mathcal{R}}(\mu)(x)= \int_{X} \int_{\Theta }f(\psi_{\theta}(x)) dp(\theta) d\mu(x),
$$
for any $f \in C(X, \mathbb{R})$, is contractive in the Monge-Kantorovich metric with $\on{Lip}(M_{\mathcal{R}})\leq \lambda <1$. In particular, for any initial measure  {$\nu_0 \in \mM_{1}(X)$} the sequence  $(M_{\mathcal{R}}^{k}(\nu_0))$ converges to $\mu_{\mathcal{R}}$ as $k \to \infty$.
\end{theorem}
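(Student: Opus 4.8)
The plan is to realize $\mu_{\mathcal{R}}$ as the unique fixed point of $M_{\mathcal{R}}$ through the Banach contraction principle applied on the complete metric space $(\mathcal{P}(X), d_{MK})$; the argument then splits into three pieces: that $M_{\mathcal{R}}$ is a well-defined self-map of $\mathcal{P}(X)$, that it is a $d_{MK}$-contraction with factor $\lambda$, and that $(\mathcal{P}(X), d_{MK})$ is complete. For the first piece I would fix $f\in C(X,\mathbb{R})$ and set $g_f(x):=\int_\Theta f(\psi_\theta(x))\,dp(\theta)$; using compactness of $\Theta$, joint continuity (or at least joint measurability) of $(\theta,x)\mapsto\psi_\theta(x)$, and uniform continuity of $f$, one checks that $g_f$ is a well-defined continuous function on $X$. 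The identity $\int_X f\,dM_{\mathcal{R}}(\mu)=\int_X g_f\,d\mu$ then exhibits $M_{\mathcal{R}}(\mu)$ as the Borel probability measure given by the Riesz representation of a positive linear functional of total mass $1$ on $C(X,\mathbb{R})$, so $M_{\mathcal{R}}\colon\mathcal{P}(X)\to\mathcal{P}(X)$ is well defined.

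For the contraction estimate I would take $\mu,\nu\in\mathcal{P}(X)$ and $f\in\operatorname{Lip}_1(X,\mathbb{R})$. The key observation is that $g_f$ is Lipschitz with $\operatorname{Lip}(g_f)\le\lambda$: since $\operatorname{Lip}(f)\le 1$ and the system is contractive on average,
$$|g_f(x)-g_f(y)|\le\int_\Theta|f(\psi_\theta(x))-f(\psi_\theta(y))|\,dp(\theta)\le\int_\Theta d(\psi_\theta(x),\psi_\theta(y))\,dp(\theta)\le\lambda\,d(x,y).$$
Consequently, assuming $\lambda>0$ (the case $\lambda=0$ being trivial), $\lambda^{-1}g_f\in\operatorname{Lip}_1(X,\mathbb{R})$, so
$$\left|\int_X f\,dM_{\mathcal{R}}(\mu)-\int_X f\,dM_{\mathcal{R}}(\nu)\right|=\left|\int_X g_f\,d\mu-\int_X g_f\,d\nu\right|\le\lambda\,d_{MK}(\mu,\nu).$$
Taking the supremum over $f\in\operatorname{Lip}_1(X,\mathbb{R})$ yields $d_{MK}(M_{\mathcal{R}}(\mu),M_{\mathcal{R}}(\nu))\le\lambda\,d_{MK}(\mu,\nu)$, i.e. $\operatorname{Lip}(M_{\mathcal{R}})\le\lambda<1$.

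Since $X$ is compact, $\mathcal{P}(X)$ is weak-$*$ compact and the Monge-Kantorovich metric $d_{MK}$ metrizes weak-$*$ convergence on $\mathcal{P}(X)$, so $(\mathcal{P}(X), d_{MK})$ is a compact, in particular complete, metric space. Banach's fixed point theorem applied to the $\lambda$-contraction $M_{\mathcal{R}}$ then gives a unique $\mu_{\mathcal{R}}\in\mathcal{P}(X)$ with $M_{\mathcal{R}}(\mu_{\mathcal{R}})=\mu_{\mathcal{R}}$, together with $M_{\mathcal{R}}^{k}(\nu_0)\to\mu_{\mathcal{R}}$ in $d_{MK}$ for every $\nu_0\in\mathcal{P}(X)$, which is exactly the assertion that $\mu_{\mathcal{R}}$ is the Hutchinson measure of $\mathcal{R}$.

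I expect the contraction inequality to be the easy part; the genuine obstacle is the soft-analysis bookkeeping: establishing that $x\mapsto g_f(x)$ is continuous so that $M_{\mathcal{R}}(\mu)$ really is a Borel measure and the fibered integrals are meaningful (this is where joint continuity/measurability of $(\theta,x)\mapsto\psi_\theta(x)$ and compactness of $\Theta$ are used), and recalling that for compact $X$ the metric $d_{MK}$ induces the weak-$*$ topology on $\mathcal{P}(X)$, which is what delivers completeness. Once these standard facts are assembled, the theorem is a direct application of the Banach contraction principle.
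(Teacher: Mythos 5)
This theorem is quoted in the paper from Mendivil (Theorem 1 of the cited reference) without an in-paper proof, and your argument is exactly the standard one given there: for $f\in\operatorname{Lip}_1(X,\mathbb{R})$ the averaged function $g_f(x)=\int_\Theta f(\psi_\theta(x))\,dp(\theta)$ is $\lambda$-Lipschitz by contractivity on average, hence $M_{\mathcal{R}}$ is a $\lambda$-contraction on $(\mathcal{P}(X),d_{MK})$, which is complete (indeed compact) since $X$ is compact and $d_{MK}$ metrizes weak-$*$ convergence, so Banach's fixed point theorem yields the unique invariant $\mu_{\mathcal{R}}$ and the convergence $M_{\mathcal{R}}^{k}(\nu_0)\to\mu_{\mathcal{R}}$. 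Your flagged use of joint continuity of $(\theta,x)\mapsto\psi_\theta(x)$ for the well-definedness of $g_f$ and of $M_{\mathcal{R}}(\mu)$ via Riesz representation is precisely the standing hypothesis in Mendivil's setting, so the proposal is correct and follows the same route.
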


{The unique measure $\mu_{\mathcal{R}}$, given by Theorem~\ref{thm:exist_measure_IIFS-Mendivil} is called as the \emph{Hutchinson measure} for the IIFSp $\mathcal{R}$.
\begin{theorem} \cite[Theorem 3]{mendivil1998generalization} \label{thm:exist_attractor_IIFS-Mendivil} Under the hypothesis from Theorem~\ref{thm:exist_measure_IIFS-Mendivil}, if the family $\psi_{\theta}$ is uniformly Lipschitz contractive, that is, $\on{Lip}(\psi_{\theta})\leq \lambda <1$ for every $\theta \in \Theta$, then $\supp(\mu_{\mathcal{R}})=A_{\mathcal{R}}$, where $A_{\mathcal{R}}$ is the attractor of $\mathcal{R}$, given by Theorem~\ref{thm:exist-attract-IIFS}.
\end{theorem}}

In \cite{mihail2009hutchinson} and \cite{Mihail2009}, Miculescu and Mihail studied the counterpart of the Hutchinson measure for GIFS.
\begin{definition}
By a \emph{GIFS with probabilities} (GIFSp in short) we mean a triplet
$$
\mS=\left(X^{m},(\phi_j)_{j\in\{1,...,n\}},(q_j)_{j\in\{1,...,n\}}\right),
$$
where $(X^{m},(\phi_j)_{j\in\{1,...,n\}})$ is a GIFS and $q_1,...,q_n>0$ with $\sum_{j=1}^n q_j=1$.\\
Each GIFSp generates a map $M_\mS:\mM_{1}(X)^m\to\mM_{1}(X)$, called the \emph{generalized Markov operator}, which  {associates} to any $\mu_0,...,\mu_{m-1}\in\mM_{1}(X)$, the measure\\ $M_\mS(\mu_0,...,\mu_{m-1})$ defined by,
 \begin{equation}
\int_{X}f  \;dM_\mS(\mu_0,...,\mu_{m-1})=\sum_{j=1}^n q_j\int_{X^m}f\circ \phi_j\;d(\mu_0\times...\times \mu_{m-1}),
\end{equation}
for every continuous map $f:X\to\R$.

By the  \emph{generalized Hutchinson measure} of a GIFSp $\mS$ we mean the unique measure $\mu_\mS\in\mM_{1}(X)$ which satisfies $\mu_\mS=M_\mS(\mu_\mS,...,\mu_\mS)$ and such that for every $\mu_0,...,\mu_{m-1}\in\mM_{1}(X)$, the sequence $(\mu_k)$ defined by $\mu_{m+k}=M_\mS(\mu_{k},...,\mu_{k+m-1})$, converges to $\mu_\mS$ with respect to the Monge-Kantorovich metric.
\end{definition}

 {A map $\phi: X^m \to X$  is an \emph{$(a_1,...,a_{m})$-contraction},  if
\begin{equation*}
d(\phi(x_0,...,x_{m-1}),\phi(y_0,...,y_{m-1})) \leq \sum_{i=1}^{m}a_i d(x_{i-1},y_{i-1})
\end{equation*}
for all  $(x_0,...,x_{m-1}),(y_0,...,y_{m-1})\in X^m$, where $\sum_{i=1}^{m}a_i <1$. In particular, $\phi$ is a Lipschitz contraction with $\operatorname{Lip}(\phi) \leq \sum_{i=1}^{m}a_i <1$.}

 {As proved in \cite{mihail2009hutchinson} and \cite{Mihail2009}}, if a GIFSp $\mS$ consists of $(a_1,...,a_{m})$-contractions, then $M_\mS$ is also  {an} $(a_1,...,a_{m})$-contraction.

As previously, we could redefine for proof purposes, $$\overline{M}_\mS(\mu):=M_\mS(\mu,...,\mu)$$ for each $\mu\in\mM_{1}(X)$ and consider $\overline{M}_\mS$ instead of $M_\mS$ because they have the same fixed point and Lipschitz constant. Under the above hypothesis   {$\overline{M}_\mS$ and $M_\mS$ are Banach contractions} with the Lipschitz constant $\on{Lip}(M_\mS)\leq \sum_{i=1}^{m}a_i <1$.

 {In consequence, Miculescu and Mihail} proved the following theorem (see also  \cite{DaCunha2020} for additional details):
\begin{theorem}[\cite{mihail2009hutchinson}, \cite{Mihail2009}]   \label{thm:huthc-measure-GIFSp-existence}
Assume that $\mS$ is a GIFSp on a complete metric space consisting of $(a_1,...,a_{m})$-contractions, where $\sum_{i=1}^{m}a_i<1$. Then, $\mS$ admits the Hutchinson measure $\mu_\mS$ and $\on{supp}(\mu_\mS)=A_\mS$.
\end{theorem}
It is worth to mention that Theorem~\ref{thm:huthc-measure-GIFSp-existence} was fairly improved  {in}  {\cite[ Theorem 4.3]{Strobin2020}},  proving that $\mS$ admits the Hutchinson measure $\mu_\mS$ and $\on{supp}(\mu_\mS)=A_\mS$ under the hypothesis that each map of the GIFS is a generalized Matkowski contraction, using new techniques and code spaces.

\begin{definition}\label{def:induced-IIFSp} Let $\mathcal{S}=(X^{m}, (\phi_j)_{j\in\{1,...,n\}}, (q_j)_{j\in\{1,...,n\}})$ be a GIFSp (of order $m$) consisting of $(a_1,...,a_{m})$-contractions. Given a set $B \in \mathcal{B}^*(X)$  and a Borel probability  {$\nu \in \mM_{1}(X)$, such that, $\supp(\nu) \subseteq B$}, we define the \emph{IIFSp induced by $(B, \nu)$}  with respect to $\mathcal{S}$ as  $\mathcal{R}_{B,\nu}=(X, (\psi_{\theta})_{\theta\in \Theta}, p),$ where  $\Theta=B^{m-1}\times \{1,...,n\}$ and $\psi_{\theta}(x)=\phi_j(x,b_{2}, ...,b_{m} )$, for $\theta=(b_{2}, ...,b_{m},j)  \in \Theta$ and   $p$ is  the Borel probability on $\Theta$ given by $\displaystyle dp=\sum_{j=1}^{n} q_{j}\delta_{j} \times d\nu^{(m-1)\times}(b)$, where $\nu^{(m-1)\times}:= \nu \times \cdots \times \nu$, $m-1$ times.
\end{definition}
Notice that
$$\int_{\Theta} f(\theta) dp(\theta)= \int_{B}\ldots\int_{B} \sum_{j=1}^{n} q_{j} \; f(b_{2}, ...,b_{m},j) d\nu(b_{2}) \ldots d\nu(b_{m}),$$
for any continuous function $f: \Theta \to \mathbb{R}$.

We claim that $\mathcal{R}_{B,\nu}$ is actually an IIFSp, according to Definition~\ref{def:IIFSp Mendivil}. In order to see that, we must show that the map $(\theta, x) \to \psi_{\theta}(x)$, given by $\psi_{\theta}(x)=\phi_j(x,b_{2}, ...,b_{m})$, is continuous in both $\theta$ and $x$. Indeed, the topology of $\Theta=B^{m-1}\times \{1,...,n\}$ is the product topology induced by $(X,d)$ on the closed set $B$ and the discrete topology on $\{1,...,n\}$. As  the map $(x_1,x_{2}, ...,x_{m}) \to \phi_j(x_1,x_{2}, ...,x_{m})$ is Lipschitz continuous, for each $j$, we obtain the continuity of $(\theta, x) \to \psi_{\theta}(x)$ with respect to both variables.

\begin{remark}\label{rem:place dependent GIFS}
  We notice that the problem of finding the unique Hutchinson measure for a place dependent GIFS, obtained by Miculescu~\cite{Miculescu2014} for a particular {class} of Lipschitz contractions and leaved as an open problem in {\cite[Problem 5.3]{Strobin2020}}, for generalized contractions,  seems to be a big challenge even using our approach. If we consider a place dependent GIFS $\mathcal{S}=(X^{m}, (\phi_j)_{j\in\{1,...,n\}}, (q_j)_{j\in\{1,...,n\}})$ then the associated IIFSp will  have a place dependent measure
  $$\displaystyle dp_{x}=\sum_{j=1}^{n} q_{j}(x)\delta_{j} \times d\nu^{(m-1)\times}(b),$$
  leading to the study of iterated function systems with measures IIFSm (see \cite[Section 2]{BOS23} for details) $$\mathcal{R}_{B,\nu}=(X, (\psi_{\theta})_{\theta\in \Theta}, p_{x}),$$ with transfer operator
$\displaystyle L(f)(x)=\int_{\Theta} f\left(\psi_{\theta}(x)\right) d p_{x}(\theta)$ (\cite[Definition 2.1]{BOS23}).
As far as we know, that class of process  has not been well studied yet. Although, as we proved in \cite{BOS23} it is possible to develop some classical results, such as the thermodynamical formalism, for those systems.
\end{remark}

The next proposition shows that a GIFSp consisting of $(a_0,...,a_{m-1})$-contractions always induces an IIFSp having a Hutchinson measure.
\begin{proposition}\label{prop: existence atractor induced IIFS}
  Let $\mathcal{S}=(X^{m}, (\phi_j)_{j\in\{1,...,n\}}, (q_j)_{j\in\{1,...,n\}})$ be a GIFSp satisfying the hypothesis of Theorem~\ref{thm:huthc-measure-GIFSp-existence} and $\mathcal{R}_{B, \nu}$ be the IIFSp induced by $(B, \nu)$, according to Definition~\ref{def:induced-IIFSp}. Then, the Markov operator $M_{\mathcal{R}_{B, \nu}}$ is contractive with respect to the Monge-Kantorovich metric with $\on{Lip}(M_{\mathcal{R}_{B, \nu}})\leq  {a_1} <1$  and  there exists $\mu_{\mathcal{R}_{B, \nu}}$, the Hutchinson measure for $\mathcal{R}_{B, \nu}$. {Additionally, $\supp(\mu_{\mathcal{R}_{B, \nu}})=A_{\mathcal{R}_{B, \nu}}$ where $A_{\mathcal{R}_{B, \nu}}$ is the attractor of $\mathcal{R}_{B, \nu}$.}
\end{proposition}
\begin{proof}
   In view of Theorem~\ref{thm:exist_measure_IIFS-Mendivil}, we need to show that the induced IIFSp $\mathcal{R}_{B,\nu}$ is contractive on average.
Indeed,
$$
\int_{\Theta} d\left(\psi_{\theta}(x), \psi_{\theta}(y)\right) dp(\theta)=
\int_{\Theta} d\left(\phi_j(x,b_{2}, ...,b_{m} ), \phi_j(y,b_{2}, ...,b_{m} )\right) dp(\theta) \leq $$ $$\leq  \int_{\Theta} \left(a_{1} d(x,y) + \sum_{i=2}^{m} a_i d(b_{i},b_{i})\right) dp(\theta) = \int_{\Theta} a_{1} d(x,y) dp(\theta)= a_{1} \, d(x, y),
$$
shows that we can take $\lambda=a_1 <1$. Thus Theorem~\ref{thm:exist_measure_IIFS-Mendivil} does apply because $X$ and $\Theta=B^{m-1}\times \{1,...,n\}$ are both compact metric spaces.  As each $\phi_j$ is an $(a_1,...,a_{m})$-contraction, we obtain $$d\left(\phi_j(x,b_{2}, ...,b_{m} ), \phi_j(y,b_{2}, ...,b_{m} )\right) \leq a_{1} \, d(x, y)$$ meaning that the induced IIFS is uniformly contractive. Then, from Theorem~\ref{thm:exist_attractor_IIFS-Mendivil} we get the equality $\supp(\mu_{\mathcal{R}_{B, \nu}})=A_{\mathcal{R}_{B, \nu}}$.
\end{proof}

The next lemma shows that when the  {IIFSp} is induced by the attractor $A_\mS $ and the Hutchinson measure $\mu_\mS$  {of a GIFSp} $\mathcal{S}$, its Markov operator has $\mu_\mS$ as a fixed point.
\begin{lemma}\label{lem:fixed_measure_IIFS-GIFS}
   Under the  hypothesis of Theorem~\ref{thm:huthc-measure-GIFSp-existence},
    {if $B=A_\mS, \nu=\mu_\mS$ and $R_{A_S, \mu_S}$ is the IIFSp induced by $\left(A_S, \mu_S\right)$, we have that $M_{\mathcal{R}_{A_\mS,\mu_\mS}}(\mu_\mS)=\mu_\mS$.} In particular, $\mu_{\mathcal{R}_{A_\mS,\mu_\mS}}= \mu_\mS$.
\end{lemma}
\begin{proof}
   The proof relay on the formula for  $M_{\mathcal{R}_{B,\nu}}$ replacing $B=A_\mS $  and $\nu=\mu_\mS$.  Indeed, given $f \in C(X)$ we get
  $$M_{\mathcal{R}_{A_\mS,\mu_\mS}}(\mu_\mS)(f)=\int_{X} \int_{ {A_\mS}^{m-1}} \sum_{j=1}^{n} q_{j} f\left(\phi_{j}(x, b)\right)  d\mu_\mS^{(m-1)\times}(b) d\mu_\mS(x)=$$
  $$=\int_{X} \int_{X^{m-1}} \sum_{j=1}^{n} q_{j} f\left(\phi_{j}(x, x_{2}, \ldots x_{m})\right)  d\mu_\mS(x_2)\ldots d\mu_\mS(x_m)  d\mu_\mS(x)=$$
  $$=M_{\mS}(\mu_\mS\ldots \mu_\mS )(f)=\mu_\mS(f),$$
  thus $M_{\mathcal{R}_{A_\mS,\mu_\mS}}(\mu_\mS)=\mu_\mS$. By Proposition~\ref{prop: existence atractor induced IIFS} the induced IIFSp has a Hutchinson measure $\mu_{\mathcal{R}_{A_\mS,\mu_\mS}}$, which is the unique measure satisfying $M_{\mathcal{R}_{A_\mS,\mu_\mS}}(\mu_{\mathcal{R}_{A_\mS,\mu_\mS}})=
\mu_{\mathcal{R}_{A_\mS,\mu_\mS}}$, thus $\mu_{\mathcal{R}_{A_\mS,\mu_\mS}}= \mu_\mS$.
\end{proof}

Since we are dealing with compact metric spaces the topology induced by the metric $d_{MK}$ in  $\mM_{1}(X)$ is equivalent to the one induced by the weak convergence of measures, whose main properties are given by
\begin{theorem}(\cite[Theorem 2.1]{Billingsley1971}) \label{thm:biling weak conv}
    The following conditions are equivalent for a sequence of probabilities $(\nu_{n}) \subset \mM_{1}(X)$:
\begin{enumerate}
  \item[a)] $\nu_{n}$ is weak convergent to $\nu \in \mM_{1}(X)$, that is, $ \int_{X} f d\nu_{n} \to \int_{X} f d\nu, \; \forall f \in C(X)$;
  \item[b)] $\displaystyle\limsup_{n \to \infty} \nu_{n}(F) \leq \nu(F),$   for every closed set $F \subset X$;
  \item[c)] $\displaystyle\liminf_{n \to \infty} \nu_{n}(G) \geq \nu(G),$   for every open set $G \subset X$;
  \item[d)] $\displaystyle\lim_{n \to \infty} \nu_{n}(A) = \nu(A),$  for every  $\nu$-continuity set $A \subset X$, that is, $\nu(\partial A)=0$.
\end{enumerate}
\end{theorem}
\begin{definition}\label{def: Gamma}
   Let $\Gamma$ be the subset of $\mathcal{K}^*(X) \times \mM_{1}(X)$ defined by:
$$\Gamma:=\{(B, \nu) | B \in \mathcal{K}^*(X), \; \nu \in \mM_{1}(X) \text{ and } \supp(\nu) \subseteq B\}.$$
\end{definition}
\begin{lemma}\label{lem: closeness of Gamma}
   The metric space $(\Gamma,d_{max})$ is complete.
\end{lemma}
\begin{proof}
   To see that $\Gamma$ is closed we consider a sequence $((B_{n}, \nu_{n}))_{n \geq 1} \subset \Gamma$ such that $(B_{n}, \nu_{n}) \to (B_{0}, \nu_{0})$ with respect to the distance $d_{max}$. By the definition of $d_{max}$ we obtain that $B_{n} \to B_{0}$ with respect to the  {Hausdorff distance $h$. The same is true for the second coordinate, that is, $\nu_{n} \to \nu_{0}$ with respect to the $d_{MK}$ distance (and so with respect to the weak convergence).} It remains to show that $(B_{0}, \nu_{0}) \in \Gamma$, that is, $\nu_{0}(B_{0})=1$ or equivalently, $\supp(\nu_{0}) \subseteq B_{0}$. Suppose, by contradiction, that it is not the case. Then, there exists $x \not\in B_{0}$ such that for any open neighborhood $U_{x}$ of $x$ we get $\nu_{0}(U_{x})>0$.  {Consider $\varepsilon>0$ such that $U_{x} \cap B_{0}^{\varepsilon}=\varnothing$, where $B_{0}^{\varepsilon}=\{ z \in X\; | \; d(z, B_{0})<\varepsilon\}$.} From the convergence $B_{n} \to B_{0}$ with respect to $h$ we obtain $N_{\varepsilon} \in \mathbb{N}$ such that for any $n \geq N_{\varepsilon}$ we have $B_{n} \subset B_{0}^{ {\varepsilon}}$. Since $(B_{n}, \nu_{n}) \in \Gamma$ we know that $\supp(\nu_{n}) \subseteq B_{n}$ so $\nu_{n}(U_{x})=0$ for any $n \geq N_{\varepsilon}$, thus $\displaystyle\liminf_{n \to \infty} \nu_{n}(U_{x})=0$. By Theorem~\ref{thm:biling weak conv} (c), we have $\displaystyle\liminf_{n \to \infty} \nu_{n}(U_{x}) \geq \nu_{0}(U_{x})$, thus $\nu_{0}(U_{x})=0$, a contradiction. \\
To complete the proof we observe that the completeness of $(\Gamma,d_{max})$ is a trivial consequence of the fact that $\Gamma$ is a closed subset of a complete metric space.
\end{proof}
\begin{definition}\label{def:evaluation-map-MEASURE} Let $\mathcal{S}=(X^{m}, (\phi_j)_{j\in\{1,...,n\}}, (q_j)_{j\in\{1,...,n\}})$ be a GIFSp   {consisting of $(a_0,...,a_{m-1})$-contractions}.  We define the \emph{joint evaluation map} $EV_{\mathcal{S}}: \Gamma \to \Gamma$ by
\begin{equation}\label{eq:eval-map-probability}
  EV_{\mathcal{S}}(B, \nu)= (A_{\mathcal{R}_{B, \nu}}, \mu_{\mathcal{R}_{B, \nu}}),
\end{equation}
for every  {$(B, \nu) \in \Gamma$,} where $A_{\mathcal{R}_{B, \nu}}$ is the attractor and $\mu_{\mathcal{R}_{B, \nu}}$ is the Hutchinson measure of the induced IIFS $\mathcal{R}_{B, \nu}$ {(given by Proposition~\ref{prop: existence atractor induced IIFS}), so $(A_{\mathcal{R}_{B, \nu}}, \mu_{\mathcal{R}_{B, \nu}}) \in \Gamma$}.
\end{definition}

The first coordinate of $EV_{\mathcal{S}}$ is just $ev_{\mathcal{S}}$, which we already know is Lipschitz, by Theorem~\ref{thm:main-thm}, with $\operatorname{Lip}(ev_{\mathcal{S}}) \leq \operatorname{Lip}(\mathcal{S}) <1$. Moreover, the next theorem shows that  $EV_{\mathcal{S}}$ is also Lipschitz contractive  {with respect to} the second coordinate.
\begin{theorem}\label{thm:main_theorem_measure}
   Under the  hypothesis of Theorem~\ref{thm:huthc-measure-GIFSp-existence} the joint evaluation map $EV_{\mathcal{S}}$ is Lipschitz contractive in $\Gamma$, {with}, $\operatorname{Lip}(EV_{\mathcal{S}})  \leq  \max\left( \operatorname{Lip}(\mathcal{S}), \frac{\sum_{i=2}^{m}a_i}{1- a_1} \right) <1$. In particular,  for any $ {(B_0, \nu_0 ) \in \Gamma}$, the sequence $(EV_{\mathcal{S}}^k(B_0, \nu_0))_{k \geq 0}$ converges to $(A_\mS,\mu_\mS) \in \Gamma$.
\end{theorem}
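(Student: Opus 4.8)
The plan is to endow $\mathcal{B}^*(X)\times\mathcal{P}(X)$ with the maximum metric $\rho\big((B,\nu),(B',\nu')\big)=\max\{h(B,B'),\,d_{MK}(\nu,\nu')\}$, which is complete because $(\mathcal{B}^*(X),h)$ is complete by Theorem~\ref{thm:metric-spaces-of-sets}(2) and $(\mathcal{P}(X),d_{MK})$ is complete in the compact setting in which $\mu_{\mathcal{R}_{B,\nu}}$ is available (Theorem~\ref{thm:exist_measure_IIFS-Arbieto_et_al}, Remark~\ref{rem:exist_hutch_measure_induced_IIFS}), and then to bound the two coordinates of $EV_{\mathcal{S}}$ separately. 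The first step is the observation that $EV_{\mathcal{S}}$ actually splits: the attractor $A_{\mathcal{R}_{B,\nu}}$ depends only on the maps $(\psi_\theta)_{\theta}$, hence only on $B$, so it equals $ev_{\mathcal{S}}(B)$; and, by formula~\eqref{eq:Markov-op-Induced-IIFS}, the Markov operator $M_{\mathcal{R}_{B,\nu}}$ — call it $M_\nu$ — depends only on $\nu$ (the domain $B^{m-1}$ of the inner integral may be replaced by $X^{m-1}$, on which $\nu^{(m-1)\times}$ lives), hence so does its unique fixed point $\mu_{\mathcal{R}_{B,\nu}}=:\mathcal{M}(\nu)$. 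Thus $EV_{\mathcal{S}}(B,\nu)=\big(ev_{\mathcal{S}}(B),\mathcal{M}(\nu)\big)$ and $\operatorname{Lip}(EV_{\mathcal{S}})=\max\{\operatorname{Lip}(ev_{\mathcal{S}}),\operatorname{Lip}(\mathcal{M})\}$ for $\rho$. The first coordinate is already under control: $\operatorname{Lip}(ev_{\mathcal{S}})\leq\operatorname{Lip}(\mathcal{S})<1$ by Theorem~\ref{thm:main-thm}.

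It remains to show $\operatorname{Lip}(\mathcal{M})\leq\frac{\sum_{i=2}^{m}a_i}{1-a_1}$, which I would obtain by the standard perturbation-of-a-family-of-contractions argument. First, since $\mathcal{R}_{B,\nu}$ is contractive on average with factor $\lambda=a_1$ (the computation recorded after Theorem~\ref{thm:exist_measure_IIFS-Mendivil}), the standard duality argument of that theorem gives $d_{MK}(M_\nu\mu,M_\nu\mu')\leq a_1\,d_{MK}(\mu,\mu')$ for all $\mu,\mu'$. Second — and this is the technical heart — one bounds the effect of changing the index measure, uniformly in $\mu$: for $f\in\operatorname{Lip}_1(X,\R)$,
$$\big|M_\nu\mu(f)-M_{\nu'}\mu(f)\big|\ \leq\ \sup_{x\in X}\,\Big|\int_{X^{m-1}}\sum_{j=1}^{n}q_j\,f(\phi_j(x,b))\,d\nu^{(m-1)\times}(b)-\int_{X^{m-1}}\sum_{j=1}^{n}q_j\,f(\phi_j(x,b))\,d(\nu')^{(m-1)\times}(b)\Big|,$$
and then one telescopes the product measures $\nu^{(m-1)\times}$ and $(\nu')^{(m-1)\times}$ one factor at a time: for fixed $x$ and fixed values of the other coordinates, the $k$-th slot of $b\in X^{m-1}$ is the $(k+1)$-st argument of $\phi_j$, so $b_k\mapsto f(\phi_j(x,\dots,b_k,\dots))$ is $a_{k+1}$-Lipschitz by the $(a_1,\dots,a_m)$-contraction hypothesis; hence the $k$-th telescoping difference is at most $a_{k+1}\,d_{MK}(\nu,\nu')$, and since $\sum_j q_j=1$ we get $d_{MK}(M_\nu\mu,M_{\nu'}\mu)\leq\big(\sum_{i=2}^{m}a_i\big)d_{MK}(\nu,\nu')$ for every $\mu$. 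Finally, applying both estimates to $\mathcal{M}(\nu)=M_\nu\mathcal{M}(\nu)$ and $\mathcal{M}(\nu')=M_{\nu'}\mathcal{M}(\nu')$ through the triangle inequality,
$$d_{MK}\big(\mathcal{M}(\nu),\mathcal{M}(\nu')\big)\ \leq\ a_1\,d_{MK}\big(\mathcal{M}(\nu),\mathcal{M}(\nu')\big)+\Big(\sum_{i=2}^{m}a_i\Big)d_{MK}(\nu,\nu'),$$
and solving for $d_{MK}(\mathcal{M}(\nu),\mathcal{M}(\nu'))$ (legitimate since $a_1<1$) yields the claimed bound, which is $<1$ precisely because $\sum_{i=1}^{m}a_i<1$.

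Combining the two coordinate estimates gives $\operatorname{Lip}(EV_{\mathcal{S}})\leq\max\{\operatorname{Lip}(\mathcal{S}),\frac{\sum_{i=2}^{m}a_i}{1-a_1}\}<1$, so $EV_{\mathcal{S}}$ is a Banach contraction on the complete space $\big(\mathcal{B}^*(X)\times\mathcal{P}(X),\rho\big)$ and $EV_{\mathcal{S}}^k(B_0,\nu_0)$ converges to its unique fixed point for every starting pair. To identify that fixed point with $(A_\mS,\mu_\mS)$ I would use, for the first coordinate, $ev_{\mathcal{S}}(A_\mS)=A_\mS$ (Theorem~\ref{thm:main-thm}, resting on Lemma~\ref{lem:prop-induced_IIFS}(3)), and, for the second, Lemma~\ref{lem:fixed_measure_IIFS-GIFS}, which gives $M_{\mathcal{R}_{A_\mS,\mu_\mS}}(\mu_\mS)=\mu_\mS$, i.e.\ $\mathcal{M}(\mu_\mS)=\mu_\mS$; uniqueness of the fixed point then forces it to be $(A_\mS,\mu_\mS)$.

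I expect the main obstacle to be the uniform perturbation estimate $d_{MK}(M_\nu\mu,M_{\nu'}\mu)\leq(\sum_{i=2}^m a_i)d_{MK}(\nu,\nu')$: one must telescope the $(m-1)$-fold products $\nu^{(m-1)\times}$ and $(\nu')^{(m-1)\times}$ carefully, match the $k$-th factor with the correct coefficient $a_{k+1}$, handle the normalization by $a_{k+1}$ in the Kantorovich duality (including the degenerate case $a_{k+1}=0$, where that slot is constant and contributes nothing), and verify that the bounds are uniform in $x$ so that they survive the integration $d\mu(x)$ — after which the remainder is routine fixed-point bookkeeping. A secondary, more bureaucratic point is making sure the framework is legitimate at all (existence of $\mu_{\mathcal{R}_{B,\nu}}$ and completeness of $(\mathcal{P}(X),d_{MK})$ inducing weak convergence), which, exactly as in Lemma~\ref{lem:fixed_measure_IIFS-GIFS}, confines the argument to the compact-$X$ setting of Theorem~\ref{thm:exist_measure_IIFS-Arbieto_et_al}.
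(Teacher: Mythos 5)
Your proposal is correct and follows essentially the same route as the paper's proof: the max metric on $\mathcal{B}^*(X)\times\mathcal{P}(X)$, the fact that the measure coordinate depends only on $\nu$, the two estimates (an $a_1$-contraction in the measure argument plus a $\sum_{i=2}^{m}a_i$ perturbation bound in the index measure, uniform in $x$), solving the resulting inequality for the fixed-point distance to get the factor $\frac{\sum_{i=2}^{m}a_i}{1-a_1}$, and identification of the fixed point via $ev_{\mathcal{S}}(A_{\mathcal{S}})=A_{\mathcal{S}}$ together with Lemma~\ref{lem:fixed_measure_IIFS-GIFS}. The only difference is presentational: you telescope the $(m-1)$-fold product measures for general $m$, whereas the paper carries out the same computation explicitly for $m=2$ through the auxiliary functions $g$, $g'$, $r$.
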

\begin{proof}
The proof will be for $m=2$ in order to avoid complex notation.  {From Lemma~\ref{lem: closeness of Gamma} the subset $\Gamma$ is a complete metric space with respect to the metric $d_{max}$, given by $d_{max}((B, \nu), (B', \nu'))=\max(h(B,B'), d_{MK}(\nu, \nu'))$ inherited from the complete metric space  $(\mathcal{K}^*(X)\times\mM_{1}(X), d_{max})$. Our aim is to use the Banach fixed point for Lipschitz contractions in $\Gamma$.}

Denoting $\mathcal{R}:=\mathcal{R}_{B, \nu}$ and $\mathcal{R}':=\mathcal{R}_{B', \nu'}$ we need to estimate only  $d_{MK}(\mu_{\mathcal{R}}, \mu_{\mathcal{R}'})$. We recall that, for each Lipschitz function $f$, with $\operatorname{Lip}(f) \leq 1$, we have  that
$$\mu_{\mathcal{R}}(f)=\int_{X} \int_{X} \sum_{j=1}^{n} q_{j} f\left(\phi_{j}(x, b)\right)  d\nu(b) d\mu_{\mathcal{R}}(x)$$
and
$$\mu_{\mathcal{R}'}(f)=\int_{X} \int_{X} \sum_{j=1}^{n} q_{j} f\left(\phi_{j}(x, b)\right)  d\nu'(b) d\mu_{\mathcal{R}'}(x),$$
where the integration over $B$ (resp. $B'$) is replaced by $X$ because the support of $\nu$ is $B$ (resp. $\nu'$ is $B'$).

Let $f: X \rightarrow \mathbb{R}$ be such that $\operatorname{Lip}(f) \leq 1$, and define functions $g, g':X \to \mathbb{R}$ by 
$$g(x)=\int_{X} \sum_{j=1}^{n} q_{j} f\left(\phi_{j}(x, b)\right)  d\nu(b)$$ and $g'(x)=\int_{X} \sum_{j=1}^{n} q_{j} f\left(\phi_{j}(x, b)\right)  d\nu'(b)$. We also define, for each $x \in X$, the function  $r_{x}:X \to \mathbb{R}$ by $ r_{x}(b)=\sum_{j=1}^{n} q_{j} f\left(\phi_{j}(x, b)\right)$. We claim that $\operatorname{Lip}( {r_{x}}) \leq a_2$, uniformly  {with respect to} $x$, and
$\operatorname{Lip}(g') \leq a_1$ (resp. $\operatorname{Lip}(g) \leq a_1$). Indeed,
$$| {r_{x}}(b)- {r_{x}}(b')| \leq \sum_{j=1}^{n} q_{j} |f\left(\phi_{j}(x, b)\right) - f\left(\phi_{j}(x, b')\right)|\leq $$ $$\leq \sum_{j=1}^{n} q_{j} \operatorname{Lip}(f) \left(a_1\; d(x,x) + a_2 \;d(b,b')\right) \leq  a_2\; d(b,b'),$$
meaning that $\operatorname{Lip}( {r_{x}}) \leq a_2$ for all $x \in X$.

Analogously,
$$|g'(x) - g'(y)|\leq \int_{X} \sum_{j=1}^{n} q_{j} |f\left(\phi_{j}(x, b)\right)- f\left(\phi_{j}(y, b)\right)| d\nu'(b) \leq $$ $$\leq \int_{X} \sum_{j=1}^{n} q_{j} \operatorname{Lip}(f) \left(a_1\; d(x,y) + a_2\; d(b,b)\right) d\nu'(b) \leq a_1\; d(x,y), $$
meaning that $\operatorname{Lip}(g') \leq a_1$.
For each $x \in X$ we obtain
$$|g(x)-g'(x)|= \left|\int_{X}   {r_{x}}(b)   d\nu(b) - \int_{X}   {r_{x}}(b)   d\nu'(b)\right| =$$ $$= a_2  \left|\int_{X}  \frac{ {r_{x}}(b)}{a_2}   d\nu(b) - \int_{X}  \frac{ {r_{x}}(b)}{a_2}   d\nu'(b)\right| \leq a_2 \; d_{MK}(\nu,\nu'),$$
because $\operatorname{Lip}(\frac{ {r_{x}}}{a_2}) \leq 1$.

Evaluating $\left| \mu_{\mathcal{R}}(f) - \mu_{\mathcal{R}'}(f)  \right|$ we obtain
$$\left| \mu_{\mathcal{R}}(f) - \mu_{\mathcal{R}'}(f)  \right| = \left| \int_{X} g(x)  d\mu_{\mathcal{R}}(x)  -   \int_{X} g'(x)  d\mu_{\mathcal{R}'}(x) \right| \leq $$
$$\leq \left| \int_{X} g(x)  d\mu_{\mathcal{R}}(x)  -   \int_{X} g'(x)  d\mu_{\mathcal{R}}(x) \right| + \left| \int_{X} g'(x)  d\mu_{\mathcal{R}}(x)  -   \int_{X} g'(x)  d\mu_{\mathcal{R}'}(x) \right|\leq$$
$$\leq \int_{X}\left| g(x)    -     g'(x)\right|  d\mu_{\mathcal{R}}(x) + a_1 \left| \int_{X} \frac{g'(x)}{a_1}  d\mu_{\mathcal{R}}(x) -  \int_{X} \frac{g'(x)}{a_1}  d\mu_{\mathcal{R}'}(x) \right| \leq $$
$$\leq a_2\; d_{MK}(\nu,\nu')+ a_1\; d_{MK}(\mu_{\mathcal{R}},\mu_{\mathcal{R}}'),$$
because $\operatorname{Lip}(\frac{g'}{a_1}) \leq 1$. Thus
$$d_{MK}(\mu_{\mathcal{R}}, \mu_{\mathcal{R}'})=\max_{ \operatorname{Lip}(f) \leq 1} \left| \mu_{\mathcal{R}}(f) - \mu_{\mathcal{R}'}(f)  \right| \leq a_2\; d_{MK}(\nu,\nu')+ a_1\; d_{MK}(\mu_{\mathcal{R}},\mu_{\mathcal{R}}'),$$
meaning that
$\displaystyle d_{MK}(\mu_{\mathcal{R}}, \mu_{\mathcal{R}'}) \leq \frac{a_2}{1- a_1} d_{MK}(\nu,\nu').$

We now return to the map $EV_{\mathcal{S}}$:
$$d_{max}(EV_{\mathcal{S}}(B, \nu), EV_{\mathcal{S}}(B', \nu'))= d_{max}((A_{\mathcal{R}}, \mu_{\mathcal{R}}), (A_{\mathcal{R}}, \mu_{\mathcal{R}}))=$$
$$=\max(h(A_{\mathcal{R}},A_{\mathcal{R}'}), d_{MK}(\mu_{\mathcal{R}}, \mu_{\mathcal{R}'})) \leq $$ $$ \leq \max\left( \operatorname{Lip}(\mathcal{S}) h(B,B'), \frac{a_2}{1- a_1} d_{MK}(\nu,\nu')\right)= $$
$$= \max\left( \operatorname{Lip}(\mathcal{S}), \frac{a_2}{1- a_1} \right) \max(  h(B,B'), d_{MK}(\nu,\nu')) =$$ $$= \max\left( \operatorname{Lip}(\mathcal{S}), \frac{a_2}{1- a_1} \right) d_{max}((B, \nu), (B', \nu')),$$
meaning that $\operatorname{Lip}(EV_{\mathcal{S}})  \leq  \max\left( \operatorname{Lip}(\mathcal{S}), \frac{a_2}{1- a_1} \right) <1$  {(recall that $\sum_{i=1}^{2} a_i <1$).} In order to conclude our proof, we just notice that {since $EV_{\mathcal{S}}$ is contractive for any $(B_0, \nu_0 ) \in \Gamma$, the sequence $(EV_{\mathcal{S}}^k(B_0, \nu_0))_{k \geq 0}$ converges to $(\overline{A},\overline{\mu}) \in \Gamma$ as $k \to \infty$, where $(\overline{A},\overline{\mu}) \in \Gamma$ is the unique fixed point of $EV_{\mathcal{S}}$. From Lemma~\ref{lem:fixed_measure_IIFS-GIFS} we obtain $EV_{\mathcal{S}}(A_\mS,\mu_\mS)= (A_\mS,\mu_\mS)$, thus $(\overline{A},\overline{\mu})= (A_\mS,\mu_\mS)$, concluding our proof.}
\end{proof}

\section{Concluding remarks and future work}\label{sec:concluding-remarks}
\subsection{Revisiting the approximation procedure}
Theorem~\ref{thm:main-thm} provides an approximation procedure to obtain the attractor of a GIFS via attractors of IIFSs. Indeed, given $B_0=\{x_0\}$  we obtain $B_1=ev_{\mathcal{S}}(B_{0})=A_{\mathcal{R}_{B_0}}$ via iteration of an initial set $H \subset \mathcal{K}^*(X)$  as in Theorem~\ref{thm:exist-attract-IIFS}.   {Then } we pick $B_2=ev_{\mathcal{S}}(B_{1})=A_{\mathcal{R}_{B_1}}$ again via iteration of an initial set $H \subset \mathcal{K}^*(X)$,   as in Theorem~\ref{thm:exist-attract-IIFS} and so on, obtaining that $h\left(B_k, A_\mathcal{S}\right) \rightarrow 0$. We have proved that $B_{k} \simeq A_{\mathcal{S}}$, meaning that $h(B_{k}, A_{\mathcal{S}}) \to 0$ as $k \to \infty$. For a GIFSp, from Theorem~\ref{thm:main_theorem_measure}, if we start with $(B_0=\{x_0\}, \nu_0=\delta_{x_0}) \in \mathcal{K}^*(X)\times\mM_{1}(X)$ we produce $(B_1, \nu_1)=EV_{\mathcal{S}}(B_0, \nu_0)=(A_{\mathcal{R}_{0}},\mu_{\mathcal{R}_{0}})$ where $\mathcal{R}_{0}:=\mathcal{R}_{B_0,\nu_0}$ is the IIFSp induced by the pair {$(B_0, \nu_0)$.} Naturally, $\supp(\nu_1) \subseteq B_1$  {from Theorem~\ref{thm:exist_measure_IIFS-Mendivil}.  {Then successively } we choose $\mathcal{R}_{1}:=\mathcal{R}_{B_1,\nu_1}$ the IIFSp induced by the pair $(B_1, \nu_1)$} and produce $(B_2, \nu_2)=EV_{\mathcal{S}}(B_1, \nu_1)=(A_{\mathcal{R}_{1}},\mu_{\mathcal{R}_{1}})$ and so on. The sequence $(B_k,\nu_k):=EV_{\mathcal{S}}^k(B_0, \nu_0) \to (A_\mS,\mu_\mS)$ as $k \to \infty$.

\subsection{Approximation algorithms}
One could ask if there exist effective algorithms to approximate the attractor $A_{\mathcal{R}_{B}}$ of the IIFS $\mathcal{R}_{B}$ and if we can use it to approximate $A_{\mathcal{S}}$.  For countable IFSs \cite[Theorem 5]{Sec01} shows that for a given compact subset $A$ of a metric space, is possible to construct
a countable iterated function system having $A$ as its attractor. The construction involves sequence of iterated function systems whose attractors
approximate $A$. For non countable  {IIFSs} \cite{mantica2010dynamical} proved some results on the approximation of measures generated by uncountably many one-dimensional affine maps.

The theoretical procedure described in Remark~\ref{rem:iteration-comparison} is quiet difficult in the practical  {use}, since it requires the computation of many attractors of infinite  {IFSs} in order to approximate $A_{\mathcal{S}}$. A possible scheme to perform this, in a reasonable way, is the following: we could start with $B_{0} \in \mathcal{K}^*(X)$ and define $\mathcal{R}_{0}:=\mathcal{R}_{B_{0}}$. We know that $F_{\mathcal{R}_{0}}(A_{\mathcal{R}_{0}})=A_{\mathcal{R}_{0}}$ and for any $H_{0} \in \mathcal{K}^*(X)$ the sequence $H_{k+1}=F_{\mathcal{R}_{0}}(H_{k})= F_{\mathcal{S}}(H_{k}, B_{0})$ converges to $A_{\mathcal{R}_{0}}$. Instead, we introduce an approximated iteration process. Since $B_0$ is compact we can find, for any $\beta_1>0$ a finite set $B_{0}^{\beta_{1}}$ such that $h(B_{0},B_{0}^{\beta_{1}})< \beta_{1}$, denoted $\beta_{1}$- {approximation of} $B_{0}$, and define the finite IFS $\mathcal{R}_{0}^{\beta_{1}}:=\mathcal{R}_{B_{0}^{\beta_{1}}}$ whose attractor $A_{\mathcal{R}_{0}^{\beta_{1}}}$ can be easily approximated with arbitrary precision $\sigma_{1}>0$. Let $B_1 \in \mathcal{K}^*(X)$  {be such that $h(B_1 , A_{\mathcal{R}_{0}^{\beta_{1}}})< \sigma_{1}$.} Since $B_1$ is compact we can find, for any $\beta_2>0$ a finite set $B_{1}^{\beta_{2}}$ such that $h(B_{1},B_{1}^{\beta_{2}})< \beta_{2}$, denoted $\beta_{2}$- {approximation of} $B_{1}$, and define the finite IFS $\mathcal{R}_{1}^{\beta_{2}}:=\mathcal{R}_{B_{1}^{\beta_{2}}}$ whose attractor $A_{\mathcal{R}_{1}^{\beta_{2}}}$ can be easily approximated with arbitrary precision $\sigma_{2}>0$.    { In this way we obtain next sets $B_2, B_3, \ldots$ which approximate $A_{\mathcal{S}}$.} This can be synthesized as an algorithm:
\begin{figure}[H]
{\tt
\begin{tabbing}
aaa\=aaa\=aaa\=aaa\=aaa\=aaa\=aaa\=aaa\= \kill
    \>  Algorithm 1 \\
    \> Consider a GIFS $\mathcal{S}=(X^{m}, (\phi_j)_{j\in\{1,...,n\}})$.\\
    \> Input: Choose sequences $\beta_{k}, \sigma_{k} \to 0$.\\
    \> Input: Choose $B_{0} \in \mathcal{K}^*(X)$, finite.\\
    \> Input: Consider $\mathcal{R}_{0}:=\mathcal{R}_{B_{0}}$.\\
    \> Input: Choose $N \in \mathbb{N}$ the number of iterations.\\
     \> Output: A compact set $B_N$ which approximate $A_{\mathcal{S}}$.\\
     \>  {\bf for } $k$ {\bf from } $1$ to $N$ {\bf do } \\
     \> \> Choose $B_{k-1}^{\beta_{k}}$ a $\beta_{k}$- {approximation of} $B_{k-1}$. \\
     \> \> Define the finite IFS $\mathcal{R}_{k-1}^{\beta_{k}}:=\mathcal{R}_{B_{k-1}^{\beta_{k}}}$. \\
     \> \>  Choose $B_{k} \in \mathcal{K}^*(X)$ such that $h(B_{k}, A_{\mathcal{R}_{k-1}^{\beta_{k}}})< \sigma_{k}$.\\
     \>  {\bf end loop}
\end{tabbing}}
\caption{Algorithm to approximate $A_{\mathcal{S}}$ by attractors of IIFSs.} \label{fig:algo}
\end{figure}
 {To see that this algorithm works}, we recall a result on the stability of the procedure of successive approximations for Banach contractive maps from \cite{Ostr1967}, or more recently, from \cite{Jachymski1997} for generalized contractions. The stability of iterations is given by the Ostrowski's theorem:
\begin{theorem}[Ostrowski]\label{yhm:ostrowski-classic}
Let $(X, d)$ be a complete metric space and $T: X \rightarrow X$ be a contraction with $\operatorname{Lip}(T) \leq \alpha<1$. Let $\left(\varepsilon_n\right)$ be a sequence of positive real numbers and $\left(y_n\right) \subset X$ be such that
$$
y_0=x_0 \text{ and } d\left(y_k, T\left(y_{k-1}\right)\right) \leq \varepsilon_k \text{ for } k \in \mathbb{N}.
$$
Then
\begin{equation}\label{eq:Ostrowski-estimative}
        d\left(y_{k}, p \right) \leq \frac{\alpha^{k}}{1-\alpha} \; d\left(x_{0}, T x_{0}\right)+\sum_{i=1}^{k} \alpha^{k-i} \; \varepsilon_{k}, \; \text{ for } k \in \mathbb{N},
    \end{equation}
where $p$ is the fixed point of $T$.
    In particular, if $\varepsilon_{k} \rightarrow 0$, then $y_{k} \rightarrow p$.
\end{theorem}

\begin{theorem}\label{thm:algorithm works}
     {The Algorithm~\ref{fig:algo}} approximates the attractor $A_{\mathcal{S}}$ of the GIFS $$\mathcal{S}=(X^{m}, (\phi_j)_{j\in\{1,...,n\}}),$$ that is, the obtained sequence  {$(B_{k})$} converges to $A_{\mathcal{S}}$ as $k \to \infty$.
\end{theorem}
\begin{proof}
  The idea is to use Theorem~\ref{yhm:ostrowski-classic}. In order to do that we choose, $T=ev_{\mathcal{S}}$, $p=A_{\mathcal{S}}$, $x_0=B_0$, $y_{k}=B_{k}$,  $\alpha=\operatorname{Lip}(ev_{\mathcal{S}}) \leq \operatorname{Lip}(F_{\mathcal{S}})<1$, $\varepsilon_{k}=\alpha \beta_{k}+\sigma_{k} \to 0$ {. We only need} to show that $h(ev_{\mathcal{S}}(B_{k}), B_{k}) \leq \varepsilon_{k}$. Indeed,
$$h(ev_{\mathcal{S}}(B_{k}), B_{k}) \leq h(ev_{\mathcal{S}}(B_{k}), ev_{\mathcal{S}}(B_{k-1}^{\beta_{k}}))+h(ev_{\mathcal{S}}(B_{k-1}^{\beta_{k}}), B_{k})\leq$$
$$\leq \alpha  h(B_{k}, B_{k-1}^{\beta_{k}}) + h(A_{\mathcal{R}_{k-1}^{\beta_{k}}}, B_{k})\leq \alpha \beta_{k}+\sigma_{k}=\varepsilon_{k}.$$
\end{proof}

We notice that the step  {(3) of the loop} in the Algorithm 1 consists in to approximate the attractor of a finite IFS, and there are many efficient ways to do that. The only computational restriction is the number of maps in $\mathcal{R}_{k-1}^{\beta_{k}}$, which is at most $n \times \sharp \{(B_{k-1}^{\beta_{k}})^{m-1}\}$. This number increases when $\beta_{k} \to 0$ but is always much smaller than the one necessary in the iteration of the original $F_{\mathcal{S}}$. For practical purposes one can choose $\beta_{k}=\sigma_{k}=\frac{1}{k}$ and then $\varepsilon_{k}=\frac{1+\alpha}{k} \to 0$.  We presented a pseudocode here, but the implementation of an actual algorithm would be the subject of a future work employing the discrete algorithm from \cite{daCunhaMultiAlgor2021} for the step (3) of the loop and making a comparison with the classical iteration for GIFS.

\subsection{Approximate Chaos Game Theorem and Ergodic theorem for GIFS}
Another natural question is if there exists some natural chaos game theorem or  ergodic theorem for the induced IIFS which could approximate, in a reasonable way, the attractor  and integrals  with respect to the Hutchinson measure of a given GIFS. We notice that a GIFS $\mathcal{S}=(X^{m}, (\phi_j)_{j\in\{1,...,n\}})$ is not a dynamical object, meaning that, from an initial  {$m$-tuple} $(x_{0}, ..., x_{m-1}) \in X^{m}$ and a $j_0 \in\{1,...,n\}$ we obtain a single value $x_{m}=\phi_{j_0}(x_{0}, ..., x_{m-1})$, but there is no obvious recipe to continue the iteration process. In \cite{Oliveira2017}  we proposed a process where $x_{m+1}=\phi_{j_1}(x_{1}, ..., x_{m})$, for $j_1 \in\{1,...,n\}$, and so on. But this process is not capable to describe the actual attractor $A_{\mathcal{S}}$, only a smaller set  (see \cite[Example 11]{Oliveira2017}).  {Unlike GIFSs, IIFSs are dynamically defined, meaning that they can} be iterated from an initial point forming an orbit.  {Given a set $B \in B^*(X)$, let $\mathcal{R}_{B}=(X, (\psi_{\theta})_{\theta\in \Theta}),$ where  {$\Theta=B^{m-1}\times \{1,...,n\}$ and $\psi_{\theta}(x)=\phi_j(x,b_{2}, ...,b_{m} )$, for $\theta=(b_{2}, ...,b_{m},j)  \in \Theta$}, be the induced IIFS. Given $x_0 \in X$ and $\theta_0 =(b_{2}^{0}, ...,b_{m}^{0}, j_0)$, define:}
$$x_{1}=\psi_{\theta_{0}}(x_{0})= \phi_{j_{0}}(x_{0},b_{2}^{0}, ...,b_{m}^{0}).$$ {Then, choose  $\theta_{1}=(b_{2}^{1}, ...,b_{m}^{1}, j_{1})$ and define:
$$x_{2}=\psi_{\theta_{1}}(x_{1})= \phi_{j_{1}}(x_{1},b_{2}^{1}, ...,b_{m}^{1}),$$
and so on. In general, define}
$$x_{k+1}=\psi_{\theta_{k}}(x_{k})= \phi_{j_{k}}(x_{k},b_{2}^{k}, ...,b_{m}^{k}), \; k \geq 0.$$
This sequence has a lot more freedom to spread than the one used in \cite{Oliveira2017}, because at each iteration $\theta_{k}=(b_{2}^{1}, ...,b_{m}^{1}, j_{k})$ is chosen accordingly a probability $p$.

 {We recall that an IFS has the chaos game property, if under some suitable hypothesis (see \cite[Theorem 1]{Barnsley2011}, also \cite{barnsley2014chaos} for a topological point of view), given a random orbit $\left\{x_k\right\}_{k=0}^{\infty}$ of $x_0$ under an IFS $\mathcal{F}$, then, with probability one,
$$
A_{\mathcal{F}}=\lim _{K \rightarrow \infty}\left\{x_k\right\}_{k=K}^{\infty}
$$
where the limit is with respect to the Hausdorff metric and $A_{\mathcal{F}}$ is the attractor of $\mathcal{F}$.} The first question is, if the process $\{(x_{i}^{k})_{i \geq 0}\}$  has the chaos game property for $\mathcal{R}_{B_{k}}$  then $$\displaystyle\lim_{k \to \infty} \overline{\{(x_{i}^{k})_{i \geq k}\}}=A_{\mathcal{S}}?$$

The second question is, since $\displaystyle \lim_{k \to \infty} A_{\mathcal{R}_{B_{k}}}=A_{\mathcal{S}}$ and $\displaystyle \lim_{k \to \infty} \mu_{\mathcal{R}_{B_{k}}}=\mu_{\mathcal{S}}$, if the process the process $(x_{i}^{k})_{i \geq 0}$  is ergodic for $\mathcal{R}_{B_{k}}$, that is, for almost (in the sense of Theorem~\ref{thm:ergodic}) all address sequences $\gamma=(\theta_{0},\theta_{1},...) \in \Theta^{\mathbb{N}}$ we have
$\displaystyle
\lim _{N \to \infty} \frac{1}{N} \left( f\left( x_{0}^{k}\right) + \cdots + f\left( x_{ {N}-1}^{k}\right) \right)=\int_{X} f(y) d\mu_{\mathcal{R}_{B_{k}}}(y),
$
for any $f \in C(X, \mathbb{R})$, then
$$
\lim _{k\to \infty}\lim _{N \to \infty} \frac{1}{N} \left( f\left( x_{0}^{k}\right) + \cdots + f\left( x_{ {N}-1}^{k}\right) \right)=\int_{X} f(y) d\mu_{\mathcal{S}}(y)?
$$

A Chaos Game result for IIFS was proved by Le{\'{s}}niak in \cite{Lesniak-2015}, but only for countable IIFS and an ergodic theorem for IIFS is given in \cite{hyong2005ergodic}
\begin{theorem}\cite[Ergodicity of IIFS]{hyong2005ergodic}\label{thm:ergodic}
 {Let $\mathcal{R}=(X, (\psi_{\theta})_{\theta\in \Theta},p)$ be an IIFSp, where $(X,d)$ is a compact metric space, $\Theta$ is compact,  $p$ is a Borel probability on $\Theta$ and $P$ is the product measure induced by $p$ in $\Theta^{\mathbb{N}}$.} If $\mathcal{R}$ is bounded, uniformly contractive ($\int_{\Theta }\operatorname{Lip}(\psi_{\theta}) dp(\theta)<1$), and $\mu$ is the  {Hutchinson} measure of the IIFSp then, for any $f \in C(X, \mathbb{R})$ and $\forall x \in X$, for $P$ almost all address sequences $\gamma=(\theta_{0},\theta_{1},...) \in \Theta^{\mathbb{N}}$ we have
$$
\lim _{n \rightarrow \infty} \frac{1}{n} \sum_{ {1} \leqslant m \leqslant n} f\left( \left(\psi_{\theta_{m-1}} \circ  \cdots \cdot \circ  \psi_{\theta_{0}}\right)(x)\right)=\int_{X} f(y) d\mu(y).
$$
\end{theorem}
Another computational ergodic theorem was proved for an IIFS in \cite{Systems2008}, but only for $\Theta=\mathbb{N}$.  To answer these questions using the above results, or others like those, would be the subject of a future work.

\subsection{Further generalizations and the respective induced IIFS}
The family of sets which are attractors  {of GIFSs is wider than the one formed by attractors of finite IFSs. However,} we proved that all this fractal attractors are also attractors of IIFS and they are also well approximated by them.Observe that if $B$ is closed and bounded, then it is the attractor of the IIFS $\mathcal{R}=(X, (\psi_{\theta})_{\theta\in \Theta})$, where $\Theta=B$ and   $\psi_{\theta}(x)=\theta$ for all $x \in X$. Hence the matter is to define IIFS with certain properties {, which generates a given set $B$.} An effort  towards finding more general fractals which are not attractors of any known IIFSs (and so of GIFSs) is to define and study infinite GIFSs. This was done in \cite{DISF2015} for a topological version of possibly infinite GIFS as a Matkowski function system, that is, a compact-to-compact family of mappings which are  uniformly generalized Matkowski contractions.

\begin{theorem}\cite[Theorem 3.5]{DISF2015}
Assume that $(X, d)$ is a complete metric space and $$\mathcal{S}=\left(X^{m},\left(\phi_{j}\right)_{j\in\mathcal{F}}\right)$$ is a GIFS that satisfies the following conditions:\\
(i) $\sup_{j \in \mathcal{F}} \operatorname{Lip}\left(\phi_j\right)<1$, and\\
(ii) The fractal operator, associated to $\mathcal{S}$ is compact-to-compact, that is, preserves compact sets: for any $A_{1}, \ldots, A_{m} \in \mathcal{K}^*(X)$
$$F_{\mS}(A_{1}, \ldots, A_{m}) =\overline{\bigcup_{j\in\mathcal{F}}\phi_{j}(A_{1}\times \ldots\times A_{m})}\in \mathcal{K}^*(X).$$ Then, $\mathcal{S}$ generates a unique {attractor}.
\end{theorem}
For such systems we can also employ our approach producing IIFSs induced by each compact set. More precisely, for a possibly infinite GIFS on $X$,  $\mathcal{S}=\left(X^{m},\left(\phi_{j}\right)_{j\in\mathcal{F}}\right)$, of order $m \geq 2$ and a set $B \in \mathcal{K}^*(X)$, the induced IIFS will be $\mathcal{R}_{B}=(X, (\psi_{\theta})_{\theta\in \Theta})$, where $\Theta=B^{m-1}\times \mathcal{F} $ and $\psi_{\theta}(x)=\phi_j(x,b_{2}, ...,b_{m} )$, for $\theta=(b_{2}, ...,b_{m},j)  \in \Theta$.

Finally, following the program of expansion and generalization of the families of sets which are fractals generated by  {IFSs one could consider (finite) GIFSs} with infinite order (denoted GIFS$_{\infty}$). Such construction  { appeared} in \cite{Maslanka2020} inspired by Seceleans's approach \cite{Secelean2014}, showing that a typical compact set in  {a Polish metric space} is a generalized fractal.  This result shows that by considering GIFS$_{\infty}$ we can describe significantly more sets than using  {classical} IFS theory. Actually, \cite{Sec01} presented, for each compact subset $K$ of a metric space, the construction of a countable iterated function system (CIFS) having $K$ as a fractal attractor. Secelean in \cite{Secelean2014} considered mappings defined on the space $(\ell_{\infty}(X), d_{\infty})$ of all bounded sequences of elements from $X$ with values in $X$, endowed with the supremum metric $d_{\infty}$, where $(X, d)$ is a metric space.

\begin{definition}
A \emph{generalized iterated function system of infinite order} (GIFS$_{\infty}$ in short)  $$\mathcal{S}=\left(\ell_{\infty}(X), \left(\phi_{j}\right)_{j\in\{1, ..., n\}}\right),$$ consists of a finite family of  {continuous} functions $\phi_{j}: \ell_{\infty}(X) \rightarrow X$.
\end{definition}
We say that $\mathcal{S}$ satisfy the compact closure property, if for every $\left(K_{k}\right) \in \ell_{\infty}(\mathcal{K}^*(X))$, the closure of the image of the product $\overline{\phi\left(\prod_{k=1}^{\infty} K_{k}\right)} \in \mathcal{K}^*(X).$

\begin{theorem}( {\cite[Theorem 3.7]{Secelean2014} or \cite[Theorem 2.5]{Maslanka2020}})
Let $(X,d)$ be a complete metric space and $\mathcal{S}$ be a GIFS$_{\infty}$ {consisting of Banach contracting maps ($\operatorname{Lip}\left(\phi_j\right)<1, \; 1\leq j\leq n$)}. If $\mathcal{S}$ satisfy the compact closure  property, then there is a unique $A_{\mathcal{S}} \in \mathcal{K}^*(X)$ such that
$$
A_{\mathcal{S}}=\bigcup_{j=1}^{n} \overline{\phi_{j}\left(\prod_{k=1}^{\infty} A_{\mathcal{S}}\right)}.$$
\end{theorem}
The set $A_{\mathcal{S}}$ is called the fractal or the attractor of the GIFS$_{\infty}$  $\mathcal{S}$.

To make a complete and up to date reference on recent developments regarding GIFS$_{\infty}$ we notice that Ma{\'{s}}lanka and Strobin \cite{MASLANKA20181795}, made a significative advance on this subject, studying some further aspects of Secelean’s setting. More precisely, the attractor of a GIFS$_{\infty}$ is approximated by attractors of GIFS of order $m$ when it increases, {\cite[Theorem 4.11]{MASLANKA20181795}}, assuming only that the GIFS maps are generalized Banach contractions. For the last, they present, in Section 7 of \cite{MASLANKA20181795}, a Cantor set on the plane which is an attractor of some GIFS$_{\infty}$, but cannot be generated by any GIFS, reinforcing the wider range of this theory regarding new fractals creation capability.

One more time, for a given GIFS$_{\infty}$, satisfying reasonable assumptions, we could investigate the induced IIFS relating its attractors with $A_{\mathcal{S}}$. More precisely, for a GIFS$_{\infty}$ on $X$, given by  $\mathcal{S}=\left(\ell_{\infty}(X), \left(\phi_{j}\right)_{j\in\{1, ..., n\}}\right)$, the induced IIFS  {would} be $\mathcal{R}_{B}=(X, (\psi_{\theta})_{\theta\in \Theta})$, where  {$\Theta=\ell_{\infty}(B) \times \{1,...,n\}$ and $\psi_{\theta}:X \to X$ is given by $\psi_{\theta}(x)=\phi_j(x,b_{2},b_{3}, ...)$, for $\theta=((b_{2}, b_{3},...),j)  \in \Theta$.}
\subsection*{Acknowledgments}
I would like to thanks professor Filip Strobin for its valuable suggestions and conversations who has truly improved this manuscript.

\end{document}